\DeclareRobustCommand{\VAN}[3]{#2}
\tikzstyle{printersafe}=[snake=snake,segment amplitude=0 pt]
\DeclareSymbolFont{cyrletters}{OT2}{wncyr}{m}{n}
\DeclareMathSymbol{\Sha}{\mathalpha}{cyrletters}{"58}
\newtheorem{Thm}[subsubsection]{Theorem}
\newtheorem{Lem}[subsubsection]{Lemma}
\newtheorem{Prop}[subsubsection]{Proposition}
\newtheorem{Cor}[subsubsection]{Corollary}
\newtheorem{Conj}[subsubsection]{Conjecture}
\newtheorem*{Thm*}{Theorem}
\newtheorem{mainThm}{Theorem}
\theoremstyle{definition}
\newtheorem{Def}[subsubsection]{Definition}
\newtheorem{Question}[subsubsection]{Question}
\newtheorem{Rem}[subsubsection]{Remark}
\newtheorem{Assump}[subsubsection]{Assumption}
\numberwithin{equation}{subsection}
\newcommand{\spec}{\operatorname{Spec}}
\newcommand{\gal}{\operatorname{Gal}}
\newcommand\restr[2]{{% we make the whole thing an ordinary symbol
  \left.\kern-\nulldelimiterspace % automatically resize the bar with \right
  #1 % the function
  \vphantom{\big|} % pretend it's a little taller at normal size
  \right|_{#2} % this is the delimiter
  }}
\newcommand{\shginf}{\operatorname{Sh}_{G}}
\newcommand{\shginfl}{\operatorname{Sh}_{G,K^{p,\ell}}}
\newcommand{\shg}{\operatorname{Sh}_{G, K^p}}
\newcommand{\shgsp}{\operatorname{Sh}_{\mathcal{G}_{V},\mathcal{K}^p}}
\newcommand{\shgspinf}{\operatorname{Sh}_{\mathcal{G}_{V}}}
\newcommand{\shgb}{\operatorname{Sh}_{G,[b],K^p}}
\newcommand{\shgbinf}{\operatorname{Sh}_{G,[b]}}
\newcommand{\afp}{\mathbb{A}_{f}^{p}}
\newcommand{\af}{\mathbb{A}_{f}}
\newcommand{\afs}{\mathbb{A}_{f}^{\Sigma}}
\newcommand{\ovfp}{\overline{\mathbb{F}}_{p}}
\newcommand{\gder}{G^{\mathrm{der}}}
\newcommand{\gad}{G^{\mathrm{ad}}}
\newcommand{\gab}{G^{\mathrm{ab}}}
\newcommand{\ql}{\mathbb{Q}_{\ell}}
\newcommand{\qpbr}{\breve{\mathbb{Q}}_{p}}
\newcommand{\zpbr}{\breve{\mathbb{Z}}_{p}}
\newcommand{\qp}{\mathbb{Q}_p}
\newcommand{\zp}{\mathbb{Z}_p}
\newcommand{\qlbar}{\overline{\mathbb{Q}}_{\ell}}
\newcommand{\qpbar}{\overline{\mathbb{Q}}_{p}}
\newcommand{\iggsp}{\operatorname{Ig}_{[b],\mathcal{G}_V,\mathcal{K}^p}}
\newcommand{\igmgsp}{\operatorname{Ig}_{M,\llbracket b \rrbracket,\mathcal{G}_V,\mathcal{K}^p}}
\newcommand{\ig}{\operatorname{Ig}_{[b],K^p}}
\newcommand{\igm}{\operatorname{Ig}_{M,\llbracket b \rrbracket,K^p}}
\newcommand{\iginf}{\operatorname{Ig}_{[b]}}
\newcommand{\xmub}{X_{\{\nu\}}(b)}
\newcommand{\admu}{\operatorname{Adm}(\{\nu\})}
\newcommand{\cy}{C_{y,K^p}}
\newcommand{\cby}{C_{\llbracket b_y\rrbracket,K^p}}
\newcommand{\cbynaught}{C_{\llbracket b_{1}\rrbracket,K^p}}
\newcommand{\cb}{C_{\llbracket b\rrbracket,K^p}}
\newcommand{\cbinf}{C_{\llbracket b\rrbracket}}
\newcommand{\cbcirc}{C_{\llbracket b \rrbracket,K^p}^{\circ}}
\newcommand{\cbyinf}{C_{\llbracket b_y \rrbracket}}
\newcommand{\cbgsp}{C_{\llbracket b \rrbracket,\mathcal{G}_{V},\mathcal{K}^p}}
\newcommand{\jbder}{J_{b}^{\mathrm{der}}}
\newcommand{\mon}{\operatorname{Mon}}
\newcommand{\mbg}{\mathbf{G}}
\newcommand{\mbgder}{\mathbf{G}^{\mathrm{der}}}
\newcommand{\mbgab}{\mathbf{G}^{\mathrm{ab}}}
\newcommand{\mbm}{\mathbf{M}}
\newcommand{\mbmder}{\mathbf{M}^{\mathrm{der}}}
\newcommand{\mbmab}{\mathbf{M}^{\mathrm{ab}}}
\newcommand{\jbderan}{J_{b, \mathrm{an}}^{\mathrm{der}}}
\newcommand{\jbderiso}{J_{b, \mathrm{iso}}^{\mathrm{der}} }
\tikzstyle{printersafe}=[decoration={snake,amplitude=0pt}]
\author{Pol van Hoften} 
\address{Department of Mathematics, Vrije Universiteit Amsterdam, De Boelelaan 1111, 1081 HV Amsterdam, The Netherlands}
\email{p.van.hoften@vu.nl}
\author{Luciena Xiao Xiao}
\address[Luciena Xiao Xiao]{Department of Mathematics and Statistics, University of Helsinki, Pietari Kalmin katu 5, 00560 Helsinki, Finland}
\email{xiao.xiao@helsinki.fi}
\thanks{Pol van Hoften was supported by the Engineering and Physical Sciences Research Council [EP/L015234/1], The EPSRC Centre for Doctoral Training in Geometry and Number Theory (The London School of Geometry and Number Theory), University College London and King's College London. L.X. Xiao was supported by ERC Advanced grant 742608 ``GeoLocLang'', UMR 7586 IMJ-PRG and CNRS}
\title{Monodromy and irreducibility of Igusa varieties}
\begin{document}
\begin{abstract}
We determine the irreducible components of Igusa varieties for Shimura varieties of Hodge type under a mild condition and use that to compute the irreducible components of central leaves. In particular, we show that a strong version of the discrete Hecke orbit conjecture is false in general. Our method combines recent work of D'Addezio on monodromy groups of compatible local systems with a generalisation of a method of Hida, using the Honda--Tate theory for Shimura varieties of Hodge type developed by Kisin--Madapusi Pera--Shin. We also determine the irreducible components of Newton strata in Shimura varieties of Hodge type by combining our methods with recent work of Zhou--Zhu.
\end{abstract}
\maketitle
%\tableofcontents
\section{Introduction}
Let $N \ge 4$, let $p$ be a prime number coprime to $N$, let $Y_1(N)$ be the modular curve of level $\Gamma_1(N)$ over $\mathbb{F}_p$ and let $Y_1(N)^{\text{ord}}$ be the ordinary locus. There is a tower of finite étale covers (see \cites{Igusa}) $\operatorname{Ig}_{m} \to Y_1(N)^{\text{ord}}$ with Galois groups $(\mathbb{Z}/p^m \mathbb{Z})^{\times}$, and we let $\operatorname{Ig}_{\infty} \to Y_1(N)^{\text{ord}}$ be their inverse limit. It is a classical result due to Igusa that $\operatorname{Ig}_{\infty}$ is irreducible.

Igusa varieties exist more generally as profinite étale covers of central leaves in the special fibers of Shimura varieties of Hodge type (cf. \cite{HamacherKim}*{Section 5}; see \cites{MantovanPEL} for the PEL case). Understanding the irreducible components of these Igusa varieties has important consequences for the theory of $p$-adic automorphic forms. For example, in the work of Eischen--Mantovan \cite{eischen2017p} on $p$-adic automorphic forms for unitary Shimura varieties, the irreducibility of Igusa varieties is assumed throughout.

\subsection{Main results}
Let $(G,X)$ be a Shimura datum of Hodge type with reflex field $E$. Let $p>2$ be a prime number, let $K^p \subset G(\afp)$ be a sufficiently small compact open subgroup and let $K_p \subset G(\qp)$ be a hyperspecial subgroup. For a prime $v | p$ of $E$, we let $\operatorname{Sh}_G$ be the base change to $\ovfp$ of the canonical integral model over $\mathcal{O}_{E, v}$ of the Shimura variety of level $K^p K_p$, see \cite[main theorem]{KisinModels}.

Let $\operatorname{Sh}_{G,[b]} \subset \operatorname{Sh}_G$ be a non-basic Newton stratum and let $C \subset \operatorname{Sh}_{G,[b]}$ be a central leaf (see \cite{HamacherKim} or Section \ref{Sec:NewtonStratification}). Then the Igusa variety $\operatorname{Ig}_{[b]} \to C$ is a profinite \'etale cover with Galois group a compact open subgroup $H_C \subset J_b(\qp)$, where $J_b(\qp)$ is the twisted centraliser in $G(\qpbr)$ of some $b \in [b]$ (see Section \ref{Sec:TwistedCentralisers}). Let $\gder$ denote the derived group of $G$. Write $G^{\mathrm{ab}}$ for the maximal abelian quotient of $G$ and $G^{\mathrm{ab}}(\zp)$ for the $\zp$-points of the connected N\'eron model of $G^{\mathrm{ab}}_{\qp}$. Under the natural surjective map $J_b(\qp) \to G^{\mathrm{ab}}(\qp)$ (see Section \ref{Sec:TwistedCentralisers}) the subgroup $H_C$ maps to $G^{\mathrm{ab}}(\zp)$, which defines an action of $H_C$ on $\gab(\zp)$. %Let $h_G$ be the Coxeter number of $G$.
\begin{mainThm} \label{Thm:Monodromy}
Assume that $G^{\mathrm{der}}$ is simply connected and $\mathbb{Q}$-simple. If $J_b^{\mathrm{der}}$ has no compact factors, then the natural map (induced by $\operatorname{Ig}_{[b]} \to C \to \operatorname{Sh}_G$)
\begin{align}
    \pi_0(\operatorname{Ig}_{[b]}) \to \pi_0(\operatorname{Sh}_G),
\end{align}
is surjective with fibers in bijection with $G^{\mathrm{ab}}(\zp)$, equivariant for the action of $H_C$.
\end{mainThm}
The natural map in Theorem \ref{Thm:Monodromy} is equivariant for the prime-to-$p$ Hecke operators, but it should not be true that these operators act trivially on the fibers, see Conjectures \ref{Conj:Igusa} and \ref{Conj:DiscreteHOWeak}.
\begin{Rem}
In the case of the modular curve, the ordinary Igusa variety $\operatorname{Ig}_{[\mathrm{ord}]}$ is a $(\mathbb{Z}_p^{\times})^2$-torsor over the ordinary locus, and our theorem tells us that its connected components are in bijection with $\mathbb{Z}_p^{\times}$; here $(\mathbb{Z}_p^{\times})^2$ acts on $\mathbb{Z}_p^{\times}$ via the product map. This recovers the result of Igusa from the first paragraph of the introduction, because the Igusa tower $\operatorname{Ig}_{\infty}$ introduced there is the inverse image of $1$ under $\operatorname{Ig}_{[\mathrm{ord}]} \to \mathbb{Z}_p^{\times}$.
\end{Rem}
The irreducibility of Igusa varieties was proved for Siegel modular varieties by Chai-Oort \cites{ChaiOort}, and their proof works more generally for Shimura varieties of PEL type when hypersymmetric points exist (cf. \cites{hida2011irreducibility,eischen2017p}). We would like to point out that even in the $\mu$-ordinary locus, hypersymmetric points often do not exist (see \cite{LXXiao}*{Corollary 7.5.}). Hida, see \cite{Hida}, proved the irreducibility of the ordinary Igusa tower over Shimura varieties of PEL type A and C without using hypersymmetric points. Our results are the first to treat Hodge type Shimura varieties and Igusa varieties over general central leaves (but see Remark \ref{Rem:KretShin}); they are even new for the $\mu$-ordinary locus in many PEL type cases. 

When $[b]$ is basic, the Igusa variety $\operatorname{Ig}_{[b]}$ is zero-dimensional and highly reducible. In particular, the theorem is false for products of Shimura varieties with $[b]$ basic in one factor and non-basic in the other; this is where the assumption that $G^{\mathrm{der}}$ is $\mathbb{Q}$-simple comes from. This assumption is equivalent to asking that the adjoint group $\gad$ is $\mathbb{Q}$-simple, since we also assume that $\gder$ is simply connected. It can be replaced with the assumption that $[b]$ is $\mathbb{Q}$-non-basic, see Section \ref{Sec:Monodromy}.
\begin{Rem}
The assumption that $J_b^{\mathrm{der}}$ has no compact factors is relatively mild; For Siegel modular varieties, it comes down to the assumption that the $F$-isocrystal corresponding to $[b]$ does not have slope $1/2$ with multiplicity one. It is automatic for the $\mu$-ordinary locus or more generally for Newton strata corresponding to $[b]$ with $J_b$ quasi-split.
\end{Rem}
We can use Theorem \ref{Thm:Monodromy} to determine the irreducible (equivalently, connected) components of the central leaf $C$.
\begin{Cor} \label{Cor:LeavesIrred}
Assume that $G^{\mathrm{der}}$ is simply connected and $\mathbb{Q}$-simple. If $J_b^{\mathrm{der}}$ has no compact factors, then the natural map
\begin{align}
    \pi_0(C) \to \pi_0(\operatorname{Sh}_G)
\end{align}
is surjective with finite fibers given by the quotient $H_C \backslash G^{\mathrm{ab}}(\zp)$.
\end{Cor}
A strong version of the discrete Hecke orbit conjecture, see \cite{KretShin}*{Question 8.2.1.$\operatorname{HO}^+_{\mathrm{disc}}$},
predicts that the natural map $\pi_0(C) \to \pi_0(\operatorname{Sh}_G)$ is an isomorphism. Using Corollary \ref{Cor:LeavesIrred}, the conjecture comes down to the surjectivity of $H_C \to G^{\mathrm{ab}}(\zp)$. We will show that if $H_C$ is a parahoric subgroup, then $H_C$ surjects onto $G^{\mathrm{ab}}(\mathbb{Z}_p)$; this is also used in our proof of Theorem \ref{Thm:Monodromy}. In Section \ref{counterexample}, we give an example, due to Rong Zhou, which shows that this equality does not always hold. In particular, \cite{KretShin}*{Question 8.2.1.$\operatorname{HO}^+_{\mathrm{disc}}$} has a negative answer in general. \medskip 

Our second main result is about irreducible components of Newton strata.
\begin{mainThm} \label{Thm:Newton}
Assume that $G^{\mathrm{der}}$ is simply connected and $\mathbb{Q}$-simple. If $J_b^{\mathrm{der}}$ has no compact factors, then the natural map
\begin{align}
    \pi_0(\operatorname{Sh}_{G,[b]}) \to \pi_0(\operatorname{Sh}_G)
\end{align}
is a bijection. Moreover, the number of irreducible components in each connected component of $\operatorname{Sh}_{G,[b]}$ is given by the representation-theoretic constant
\begin{align}
    \operatorname{Dim} V_{\mu}(\lambda_b)_{\mathrm{rel}},
\end{align}
introduced in \cite[Section 2.6]{ZhouZhu}. In particular, if $G_{\qp}$ is split, then the connected components of $\operatorname{Sh}_{G,[b]}$ are irreducible.
\end{mainThm}
Theorems \ref{Thm:Monodromy} and \ref{Thm:Newton} were proved for Siegel modular varieties by Chai and Oort in their seminal paper \cites{ChaiOort}. Amusingly, they prove irreducibility of Newton strata first, irreducibility of central leaves second and irreducibility of Igusa varieties last. \smallskip

\begin{Rem}
In Section \ref{Sec:MainThm} we prove versions of our main theorems beyond the case that $K_p$ is hyperspecial. To be precise, we work with the Igusa varieties constructed by Hamacher--Kim, see \cite{HamacherKim}, over the Kisin--Pappas integral models of Shimura varieties of Hodge type, see \cite{KisinPappas}, when $K_p$ is a connected very special parahoric. See Theorems \ref{Thm:Monodromy2} and \ref{Thm:Newton2} for the more general versions of Theorems \ref{Thm:Monodromy} and \ref{Thm:Newton} and Corollary \ref{Cor:LeavesIrred2} for the general version of Corollary \ref{Cor:LeavesIrred}.
\end{Rem}
\begin{Rem}\label{Rem:KretShin} In recent work \cites{KretShin}, Kret and Shin also determine the irreducible components of Igusa varieties when $G_{\qp}$ is unramified, and they moreover prove the discrete Hecke orbit conjecture (Conjecture \ref{Conj:DiscreteHOWeak}). Their proof uses harmonic analysis and automorphic forms and is completely different from ours. They compute the $0$-th étale cohomology of $\operatorname{Ig}_{[b]}$ as a representation of $G(\afp) \times J_b(\qp)$ using the Langlands--Kottwitz method (\cite{KretShin}*{Theorem A}), and then determine the irreducible components of $\operatorname{Ig}_{[b]}$ using that computation. It might be possible to recover their computation of the $0$-th étale cohomology from Theorem \ref{Thm:Monodromy}, see Conjecture \ref{Conj:Igusa} and Question \ref{Q:ToriGeneratedGab}.
\end{Rem}

\subsection{Strategy}
\subsubsection{Setup}
Recall that the Igusa variety is a profinite étale cover $\operatorname{Ig}_{[b]} \to C$ of a central leaf $C$ inside the Newton stratum $\operatorname{Sh}_{G,[b]}$. To be precise, it is a torsor for a compact open subgroup $H_C \subset J_b(\qp)$, where $J_b$ is the twisted centraliser in $G(\qpbr)$ attached to some $b \in [b]$ (see Section \ref{Sec:Notation}). There are many different central leaves $C$ inside the Newton stratum $\operatorname{Sh}_{G,[b]}$, all giving rise to isomorphic Igusa varieties, but the group $H_C$ \emph{does} depend on $C$. For the purposes of our proof we will always choose $C$ to be a \emph{distinguished} central leaf, that is, a central leaf that is also an Ekedahl--Oort stratum; these always exist by \cite[Theorem D]{ShenZhang}, see Section \ref{Sec:Distinguished}. Informally, distinguished central leaves are the `smallest possible' central leaves $C$ and correspondingly $H_C$ is `as large as possible' when $C$ is distinguished. For the rest of this section, we assume that $C$ is distinguished and write $H$ for $H_C$. \smallskip 

The algebraic group $J_b$ is an inner form of a Levi subgroup of $G_{\qp}$, and therefore has a surjective map $J_b \to G^{\mathrm{ab}}$. Let $J_b'$ be the kernel of this map and let $H'=H \cap J_b'(\qp)$. In Section \ref{Sec:Parahoric}, we will show that $H/H'$ is isomorphic to $G^{\mathrm{ab}}(\mathbb{Z}_p)$. When $C$ is not distinguished, the quotient $H_C/H_C'$ is in general strictly contained in $\gab(\zp)$, see Section \ref{counterexample}.

\subsubsection{Outline of the proof} Theorem \ref{Thm:Newton} is a consequence of Theorem \ref{Thm:Monodromy} together with a careful analysis of the Mantovan product-formula, due to Hamacher--Kim in this generality, and results of Chen--Kisin--Viehmann, Zhou--Zhu and He--Zhou, see \cites{MantovanPEL, HamacherKim,ChenKisinViehmann, ZhouZhu,He-Zhou}. Below, we will outline the proof of Theorem \ref{Thm:Monodromy}. \medskip 

Under the assumptions of our main theorem, distinguished central leaves are irreducible. To be precise, \cite{vH}*{Theorem 4} tells us that for a distinguished $C$ the natural map
\begin{align} \label{Eq:FibersEquation}
    \pi_0(C) \to \pi_0(\operatorname{Sh}_G)
\end{align}
is a bijection. Therefore, the main theorem would follow if we could show that the fibers of $\pi_0(\ig) \to \pi_0(C)$ are in bijection with $G^{\mathrm{ab}}(\mathbb{Z}_p)$, equivariant for the action of $H$ via $H \to \gab(\zp)$. This comes down to showing that the stabiliser in $H$ of a connected component of $\ig$ is given by $H'$. \smallskip 

Fix a connected component $C^{\circ}$ of $C$. The $H$-torsor $\operatorname{Ig}_{[b]}^{\circ} \to C^{\circ}$ corresponds to a continuous morphism
\begin{align}
    \rho_{\operatorname{Ig}}: \pi_1^{\text{ét}}(C^{\circ}) \to H,
\end{align}
with image $M$. If we could show that $M=H'$, then the fibers of $\operatorname{Ig}_{[b]}^{\circ} \to C^{\circ}$ are given by $H/H'=\gab(\zp)$. Let $\mathbb{M}$ be the Zariski closure of $M$ inside $J_b(\qp)$, which is an algebraic group over $\qp$.

Let $A \to C^{\circ}$ be the `universal' abelian variety coming from a choice of Hodge embedding. It is a consequence of \cite{vH}*{Theorem 4} that the $\ell$-adic monodromy of $A$ over $C^{\circ}$ is maximal. This can be combined with two results of D'Addezio from \cites{d2020monodromy, DAddezioII} to show that a certain (overconvergent) $p$-adic monodromy group over $C^{\circ}$ is maximal. We deduce from this that $\mathbb{M}= J_b'$ using Proposition \ref{Prop:FactIsocrystals}, which
compares $\mathbb{M}$ with the monodromy group of the $F$-isocrystal associated to $A$ over $C^{\circ}$; this proposition might be of independent interest. As a corollary, we find that $M$ contains a compact open subgroup of $\jbder(\qp)$ and that $M$ is contained in $J_b'(\qp)$. In order to show that $M=H'$, we will make use of the fact that the action of $H$ on $\operatorname{Ig}_{[b]}$ extends to an action of $J_b(\qp)$. The goal is to show that the action of $J_b'(\qp)$ on $\pi_0(\operatorname{Ig}_{[b]})$ is trivial, which then implies that $M=H \cap J_b'(\qp)=H'$. 

\subsubsection{} We will show in Section \ref{Sec:Monodromy} via a group-theoretic argument that $\jbder(\qp)$ acts trivially on $\pi_0(\operatorname{Ig}_{[b]})$. The main ingredients are the equality $\mathbb{M}= J_b'$ mentioned above and the fact that the $\qp$-points of semisimple and simply connected groups with no compact factors have no finite index proper subgroups. We then show in Section \ref{Section4} that for each connected component $Z$ of $\operatorname{Ig}_{[b]}$, there is a maximal torus $T' \subset J_b'$ such that $T'(\qp)$ stabilises $Z$. In fact, we can show that this is true for all maximal tori $T'$ in $J_b'$, up to isomorphism of tori. This is done by generalising an argument of Hida \cite{Hida} using the Honda--Tate theory for Shimura varieties of \cite{KMPS}. \smallskip 

In Section \ref{Sec:GroupTheory}, we will deduce from this that $J_b'(\qp)$ acts trivially on $\pi_0(\operatorname{Ig}_{[b]})$. Indeed, we will show that $J_b'(\qp)$ is (topologically) generated by $\jbder(\qp)$ and $T_1(\qp), \cdots, T_n(\qp)$, where $T_i$ are maximal tori of $J_b'$, which may be specified up to isomorphism, see Proposition \ref{Prop:GroupTheory}. When $J_b'$ is quasi-split one only needs a single torus $T_1$, namely the centraliser of a maximal split torus. When $J_b'$ is not quasi-split, one needs $2^m$ tori for some explicit $m$ depending on $[b]$, see the statement of Proposition \ref{Prop:GroupTheory}.

In Section \ref{Sec:MainThm} we state and prove the general versions of our main results.

\section{Shimura varieties and Igusa varieties} \label{Sec:Notation}
The goal of this section is to recall the integral models of Shimura varieties of Hodge type constructed in \cites{KMPS,KisinPappas}, and the constructions of central leaves and Igusa varieties from \cite{HamacherKim}.

\subsubsection{Hodge cocharacters} \label{Sec:HodgeCocharacters} If $(G,X)$ is a Shimura datum, then for each $x \in X$ there is a cocharacter $\mu_x:\mathbb{G}_{m,\mathbb{C}} \to G_{\mathbb{C}}$, see \cite[Section 1.2.3]{KMPS} for the precise definition. The $G(\mathbb{C})$-conjugacy class of $\mu_x$ does not depend on the choice of $x$ and we will denote it by $\{\mu\}$. This conjugacy class of cocharacters is defined over a number field $E \subset \mathbb{C}$, called the reflex field. Given a place $v$ of $E$ above a rational prime $p$ and a choice of algebraic closure $E_v \to \qpbar$, there is an induced $G(\qpbar)$-conjugacy class of cocharacters of $G_{\qp}$, which we will also denote by $\{\mu\}$. 
\subsection{The construction of integral models} \label{Sec:Models}
For a symplectic space $(V, \psi)$ over $\mathbb{Q}$, we write $\mathcal{G}_V:=\operatorname{GSp}(V, \psi)$ for the group of symplectic similitudes of $V$ over $\mathbb{Q}$. It admits a Shimura datum $(\mathcal{G}_V, \mathcal{H}_V)$, where $\mathcal{H}_V$ is the union of the Siegel upper and lower half-spaces. Let $(G,X)$ be a Shimura datum of Hodge type with reflex field $E$ and let $(G,X) \to (\mathcal{G}_V, \mathcal{H}_V)$ be a Hodge embedding. Fix a prime $p>2$ such that $G_{\qp}$ is tamely ramified and such that the order of $\pi_1(\gder)$ is coprime to $p$. 

Choose a $\mathbb{Z}_{(p)}$-lattice $V_{(p)} \subset V$ on which $\psi$ is $\mathbb{Z}_{(p)}$-valued, and write $V_p=V_{(p)} \otimes \mathbb{Z}_p$. Write $\mathcal{K}_p \subset \mathcal{G}_V(\qp)$ for the stabiliser of $V_p$ in $\mathcal{G}_V(\qp)$, and similarly write $K_p$ for the stabiliser of $V_p$ in $G(\qp)$. We will assume that $K_p$ is a parahoric subgroup and let $\mathcal{G}$ be the parahoric group scheme over $\zp$ with $\mathcal{G}(\zp)=K_p$.

Because $K_p$ is the inverse image in $G(\qp)$ of the stabiliser of a lattice, it is automatically a \emph{connected parahoric subgroup} in the sense of \cite[start of Section 2]{Zhou}. Conversely, given a connected parahoric subgroup $K_p$ of $G(\qp)$ we can always find a Hodge embedding $(G,X) \to (\mathcal{G}_V, \mathcal{H}_V)$ and a $\mathbb{Z}_{(p)}$-lattice $V_{(p)} \subset V$ on which $\psi$ is $\mathbb{Z}_{(p)}$-valued, such that $K_p$ is the stabiliser of $V_p$ in $G(\qp)$; this is explained in \cite[Section 1.3.2]{KMPS}. 

\subsubsection{} For every sufficiently small compact open subgroup $K^p \subset G(\afp)$, we can find $\mathcal{K}^p \subset \mathcal{G}_V(\afp)$ such that the Hodge embedding induces a closed immersion (see \cite[Lemma 2.1.2]{KisinModels})
\begin{align}
    \mathbf{Sh}_{K}(G,X) \to \mathbf{Sh}_{\mathcal{K}}(\mathcal{G}_V, \mathcal{H}_V) \otimes_{\mathbb{Q}} E 
\end{align}
of Shimura varieties of levels $K=K^pK_p$ and $\mathcal{K}=\mathcal{K}^p \mathcal{K}_p$, respectively.
We let $\mathcal{S}_{\mathcal{K}}$ over $\mathbb{Z}_{(p)}$ be the moduli-theoretic integral model of $\mathbf{Sh}_{\mathcal{K}}(\mathcal{G}_V, \mathcal{H}_V)$; it is a moduli space of (weakly) polarised abelian schemes $(A, \lambda)$ up to prime-to-$p$ isogeny with level $\mathcal{K}^p$-structure. Fix a prime $v | p$ of $E$ and let
\begin{align}
    \mathscr{S}_{K}:=\mathscr{S}_K(G,X) \to \mathcal{S}_{\mathcal{K}} \otimes_{\mathbb{Z}_{(p)}} \mathcal{O}_{E,(v)}
\end{align}
be the normalisation of the Zariski closure of $\mathbf{Sh}_{K}(G,X)$ in $\mathcal{S}_{\mathcal{K}} \otimes_{\mathbb{Z}_{(p)}} \mathcal{O}_{E,(v)}$. This construction is compatible with changing the level away from $p$ and we define
\begin{align}
    \mathcal{S}_{\mathcal{K}_p}:=\varprojlim_{\mathcal{K}^p \subset \mathcal{G}_V(\afp)} \mathcal{S}_{\mathcal{K}^p \mathcal{K}_p}; \qquad
    \mathscr{S}_{K_p}:=\varprojlim_{K^p \subset G(\afp)} \mathscr{S}_{K^pK_p}.
\end{align}
Then, as discussed in \cite[Section 2.1]{KMPS}, the transition maps in both inverse systems are finite \'etale and moreover $G(\afp)$ acts on $\mathscr{S}_{K_p}$. Choose a map $\mathcal{O}_{E,(v)} \to \ovfp$ and write $\shginf$ for the base change to $\ovfp$ of $\mathscr{S}_{K_p}$ and $\shg$ for the base change to $\ovfp$ of $\mathscr{S}_{K^pK_p}$; these are both schemes over $\ovfp$ and $G(\afp)$ acts on $\shginf$. We will write $\shgsp$ for the base change to $\ovfp$ of $\mathcal{S}_{\mathcal{K}^p \mathcal{K}_p} \otimes_{\mathbb{Z}_{(p)}} \mathcal{O}_{E,(v)}$ and $\shgspinf$ the base change to $\ovfp$ of $\mathcal{S}_{\mathcal{K}_p} \otimes_{\mathbb{Z}_{(p)}} \mathcal{O}_{E,(v)}$. In particular, we are omitting $K_p$ from the notation.

Let $\pi:A \to \shgspinf$ be the universal abelian variety and let $\mathcal{V}^p$ be the prime-to-$p$ adelic Tate module of $A$; it is a smooth $\afp$-sheaf on $\mathcal{S}_{\mathcal{K}_p}$. As explained in \cite[Section 2.1.1]{KMPS}, there is a universal isomorphism
\begin{align}
    \epsilon:V \otimes \underline{\mathbb{A}}_f^p \simeq \mathcal{V}^p, 
\end{align}
sending the symplectic form $\psi$ to an $\underline{\mathbb{A}}_f^{p, \times}$-multiple of the Weil pairing. Here $\underline{\mathbb{A}}_f^p$ denotes the pro-\'etale sheaf associated to the topological group $\mathbb{A}_f^p$. 

\subsubsection{Tensors} \label{Sec:Tensors} Write $V^{\otimes}$ for the direct sum of $V^{\otimes n} \otimes (V^{\ast})^{\otimes m}$ for all pairs of integers $m \ge 0, n \ge 0$. We will also use this notation later for modules over commutative rings and modules over sheaves of rings. Write $G_{\mathbb{Z}_{(p)}}$ for the Zariski closure of $G$ in $GL(V_p)$, then $G_{\mathbb{Z}_{(p)}} \otimes \zp=\mathcal{G}$.

By \cite[Lemma 1.3.2]{KisinModels}, there are tensors $\{s_{\alpha}\}_{\alpha \in \mathscr{A}} \subset V_p^{\otimes}$ such that $G_{\mathbb{Z}_{(p)}}$ is their pointwise stabiliser in $\operatorname{GL}(V_p)$. Then, as explained in \cite[Section 1.3.4, Section 2.1.2]{KMPS}, there are global sections
\begin{align}
    \{s_{\alpha, \afp}\}_{\alpha \in \mathscr{A}} \in H^0( \mathscr{S}_{K^pK_p}, (\mathcal{V}^p)^{\otimes})
\end{align}
such that if we restrict the isomorphism $\epsilon$ via $\mathscr{S}_{K_p} \to \mathcal{S}_{\mathcal{K}_p}$, then we get an isomorphism
\begin{align}
    \eta:V \otimes \underline{\mathbb{A}}_f^p \to \mathcal{V}^p
\end{align}
taking $s_{\alpha} \otimes 1$ to $s_{\alpha, \afp}$ for all $\alpha \in \mathscr{A}$. In particular, for each $x \in \mathscr{S}_{K_p}(\ovfp)$, the stabiliser of the tensors $\{s_{\alpha, \afp,x}\}_{\alpha \in \mathscr{A}} \subset (\mathcal{V}^p_x)^{\otimes}$ in $\operatorname{GL}(\mathcal{V}^p_x)$ is canonically identified with $G \otimes \afp$. Here, the subscript $x$ denotes taking the stalk at $x$ of a sheaf (respectively, a section of a sheaf).

\subsubsection{Central leaves and Newton strata} \label{Sec:NewtonStratification} We will use $\zpbr$ to denote the $p$-typical Witt vectors $W(\ovfp)$ of $\ovfp$ and $\qpbr=\zpbr[1/p]$. We let $\sigma:\zpbr \to \zpbr$ be the automorphism induced by the Frobenius of $\ovfp$, and also denote by $\sigma$ the induced automorphism of $\qpbr$. Since $K_p$ is a parahoric subgroup, the integral models $\mathscr{S}_{\mathcal{K}^p \mathcal{K}_p}$ are the same as the ones constructed in \cite{KisinPappas}.

For $x \in \shg(\ovfp)$, we write $A_x$ for the abelian variety over $\ovfp$ corresponding to the image of $x \in \shgsp(\ovfp)$. Let $x \in \shg(\ovfp)$ and let $\mathbb{D}_x$ be the contravariant Dieudonn\'e module of the $p$-divisible group $A_x[p^{\infty}]$, equipped with its Frobenius $\phi$. By \cite[Paragraph before Proposition 2.4.2]{HamacherKim}, there are $\phi$-invariant tensors $\{s_{\alpha, \mathrm{cris},x} \}_{\alpha \in \mathscr{A}} \subset \mathbb{D}_x^{\otimes}$, and in \cite[Section 2.5]{HamacherKim} it is argued that there is an isomorphism $\zpbr \otimes V_p \to \mathbb{D}_x$ sending $1 \otimes s_{\alpha}$ to $s_{\alpha, \mathrm{cris},x}$ for all $\alpha \in \mathscr{A}$. 

Under such an isomorphism, the Frobenius $\phi$ corresponds to an element $b_x \in G(\qpbr)$, which is well-defined up to $\sigma$-conjugacy by $\mathcal{G}(\zpbr)$, where $\sigma:G(\qpbr) \to G(\qpbr)$ is induced by $\sigma:\qpbr \to \qpbr$. We will denote by $\llbracket b_x\rrbracket$ the $\mathcal{G}(\zpbr)$-$\sigma$-conjugacy class of $b_x$ and by $[b_x]$ the $G(\qpbr)$-$\sigma$-conjugacy class of $b_x$. We denote the set of $G(\qpbr)$-$\sigma$-conjugacy classes in $G(\qpbr)$ by $B(G)=B(G_{\qp})$. By \cite[Lemma 1.3.9]{KMPS}, the element $[b_x]$ is contained in the neutral acceptable set $B(G,\{\mu^{-1}\}) \subset B(G)$, consisting of the $\{\mu^{-1}\}$-admissible elements defined in \cite[Section 1.1.5]{KMPS}. 

There is a natural partial order on $B(G,\{\mu^{-1}\})$ defined in \cite{RapoportRichartz}. The set $B(G,\{\mu^{-1}\})$ admits a unique minimal element called the \emph{basic} element, and when $G_{\qp}$ is quasi-split it admits a unique maximal element called the $\mu$-ordinary element.

It follows from \cite[Cor. 3.3.8]{HamacherKim} that for $b \in G(\qpbr)$ there are (reduced) locally closed subschemes
\begin{align}
    \cb \subset \shgb \subset \shg
\end{align}
of $\shg$ such that their $\ovfp$-points can be identified with
\begin{align}
    \cb(\ovfp)&=\{x \in \shg(\ovfp) \; : \; \llbracket b_x\rrbracket=\llbracket b \rrbracket \} \\
    \shgb(\ovfp)&=\{x \in \shg(\ovfp) \; : \; [b_x]=[b] \}.
\end{align}
The subscheme $\shgb$ is called the \emph{Newton stratum} attached to $[b]$, and the subscheme $\cb \subset \shgb$ is called the \emph{central leaf} attached to $\llbracket b \rrbracket$. The construction of these subschemes is compatible with changing the level away from $p$ and we set
\begin{align}
    \cbinf&:=\varprojlim_{K^p \subset G(\afp)} \cb, \\
    \shgbinf&:=\varprojlim_{K^p \subset G(\afp)} \shgb.
\end{align}
Finally, we note that the natural map $\cb \to \shgb$ is a closed immersion by \cite[Cor. 3.3.8]{HamacherKim} and that the central leaf $\cb$ is smooth and equidimensional by \cite[Corollary 5.3.1]{KimLeaves}. 
\begin{Rem} \label{Rem:ChaiOortLeaves}
When $(G,X)=(\mathcal{G}_V, \mathcal{H}_V)$, then the $\mathcal{G}(\zpbr)$-conjugacy class $\llbracket b_x \rrbracket$ captures precisely the isomorphism class of the polarised $p$-divisible group $(A_x[p^{\infty}], \lambda_x)$, where an isomorphism of polarised $p$-divisible groups $f:(Y, \mu) \to (Y', \mu')$ is an isomorphism $f:Y \to Y'$ such that $f^{\ast} \mu'=c \mu$ for some $c \in \mathbb{Z}_p^{\times}$. In particular, when $(G,X)=(\mathcal{G}_V, \mathcal{H}_V)$ our central leaves do \emph{not} agree with those defined in \cite{ChaiOort}, which are defined using isomorphisms $f:(Y, \mu) \to (Y', \mu')$ with $f^{\ast} \mu'=\mu$. 

In general, our central leaves are finite unions of the central leaves of \cite{ChaiOort}, which explains why it is not surprising that our central leaves
can have more connected components than $\shg$, see Section \ref{counterexample}. It would be interesting to find an explicit example of this with $(G,X)=(\mathcal{G}_V, \mathcal{H}_V)$.
\end{Rem}
\subsubsection{} \label{Sec:TwistedCentralisers} For $b \in [b]$ we write $J_b$ for the twisted centraliser of $b$, which is an algebraic group over $\qp$ with $R$-points given by $$J_b(R)=\{g\in G(R\otimes_{\qp}\breve{\mathbb{Q}}_p)\; | \: g^{-1} b \sigma(g)=b\}.$$ The isomorphism class of the algebraic group $J_b$ only depends on the $\sigma$-conjugacy class $[b]$ of $b$. 

When $G_{\qp}$ is quasi-split, the algebraic group $J_b$ is an inner form of a standard Levi subgroup $M_b \subset G_{\qp}$ (see \cite[Section 1.1.4]{KMPS}). Moreover there is a natural map $J_{b,\qpbr} \to G_{\qpbr}$ with image $M_b$. Let $\gab_{\qp}$ denote the maximal abelian quotient of $G_{\qp}$, we will use the same notation for other reductive groups. The natural map $$M_b \to G_{\qp} \to \gab_{\qp}$$ factors through $M_b^{\mathrm{ab}}$, which we can identify with $J_b^{\mathrm{ab}}$ using the inner twisting. We denote the kernel of the composite map $J_b \to J_b^{\mathrm{ab}} \to \gab_{\qp}$ by $J_b'$.
\begin{Lem} \label{Lem:JbpConnected}
The group $J_b'$ is connected reductive.
\end{Lem}
\begin{proof}
We can prove this after basechanging to $\qpbr$ and we can identify $J_{b, \qpbr} \to \gab_{\qpbr}$ with $M_{b, \qpbr} \to \gab_{\qpbr}$. Thus it suffices to prove that the kernel $M_b'$ of $M_b \to \gab_{\qp}$ is connected. The group $M_b$ is connected reductive because it is a Levi subgroup, and $M_b'=\gder_{\qp} \cap M_b$ is connected because it is the corresponding Levi subgroup of $\gder$.
\end{proof}

If $\mathcal{G}$ is the parahoric group scheme over $\zp$ with $\mathcal{G}(\zp)=K_p \subset \mathcal{G}(\qp)=G(\qp)$, then we will write $H_{b} \subset J_b(\qp)$ for the compact open subgroup of $J_b(\qp)$ given by $\mathcal{G}(\zpbr) \cap J_b(\qp)$. 
\subsubsection{Automorphism groups} Let $x \in \shginf(\ovfp)$ and let $\operatorname{Aut}(A_x)$ be the algebraic group over $\mathbb{Q}$ with functor of points
\begin{align}
    R \mapsto \operatorname{Aut}(A_x)(R)=\left(\operatorname{End}(A_x) \otimes_{\mathbb{Z}} R \right)^{\times}.
\end{align}
Following \cite[Section 2.1.3]{KMPS}, we define $I_x^p$ to be the largest closed subgroup of $\operatorname{Aut}(A_x)$ that fixes the tensors $s_{\alpha, \afp,x}$ for all $\alpha \in \mathscr{A}$, and $I_x \subset I_x^p$ to be the largest closed subgroup that also fixes the tensors $s_{\alpha, \mathrm{cris},x}$ for all $\alpha \in \mathscr{A}$. Then there is a natural injective homomorphism of algebraic groups
\begin{align}
    j_{x}^p:I_{x,\afp} \to G_{\afp}.
\end{align}
The group $I_{x}$ is a connected reductive group and the subgroup $I_{x,\qlbar} \subset G_{\qlbar}$ is a Levi subgroup, see \cite[Corollary 2.1.9]{KMPS}. A choice of isomorphism $\qpbr \otimes V \to \mathbb{D}_x[1/p]$ sending $1 \otimes s_{\alpha}$ to $s_{\alpha,\mathrm{cris},x}$ for all $\alpha \in \mathscr{A}$, under which the Frobenius of $\mathbb{D}_x[1/p]$ corresponds to $b \in G(\qpbr)$, induces a map $I_{x,\qp} \to J_b$. 

\subsection{Igusa varieties}\label{Sec:Igusa} We will now recall the construction of Igusa varieties from \cite{HamacherKim}. Fix $[b] \in B(G,\{\mu^{-1}\})$ and let $\shgb \subset \shg$ be the corresponding Newton stratum. We also fix a basepoint $x \in \shgbinf(\ovfp)$ with corresponding principally polarised abelian variety $(A_x,\lambda_x)$ and write $(\mathbb{X},\mu)$ for the associated polarised $p$-divisible group over $\ovfp$. We also fix an isomorphism $\zpbr \otimes V_p \to \mathbb{D}_x$ sending $1 \otimes s_{\alpha}$ to $s_{\alpha,\mathrm{cris},x}$ for all $\alpha \in \mathscr{A}$, and we let $b \in G(\qpbr)$ be the element corresponding to the Frobenius of $\mathbb{D}_x$. Then we have the Igusa variety
\begin{align}
\iggsp \to \shgsp,
\end{align}
which is the $\shgsp$-scheme with functor of points 
\begin{align}
T \mapsto \operatorname{Isom}_{\lambda}((A_T[p^{\infty}],\lambda_T), (\mathbb{X}_T, \mu_T)).
\end{align}
Here $\operatorname{Isom}_{\lambda}$ denotes the set of isomorphisms $f:A_T[p^{\infty}] \to \mathbb{X}_T$ such that $f^{\ast} \mu_T = c \lambda_T$ for some $c \in \mathbb{Z}_p^{\times}$. The functor $\iggsp$ is representable by a perfect scheme by \cite[Proposition 4.3.3, Corollary 4.3.5]{CaraianiScholze}. The scheme $\iggsp$ has a natural (right) action of the profinite group $\operatorname{Aut}_{\lambda}(\mathbb{X})$ of automorphisms $f: \mathbb{X} \to \mathbb{X}$ satisfying $f^{\ast} \mu = c \mu$ for some $c \in \mathbb{Z}_p^{\times}$. Moreover, by \cite[Corollary 4.3.5]{CaraianiScholze}, the natural action $\operatorname{Aut}_{\lambda}(\mathbb{X})$ extends to an action of the locally profinite group $\operatorname{QIsog}_{\lambda}(\mathbb{X})$ of quasi-isogenies $f:\mathbb{X} \dashrightarrow \mathbb{X}$ satisfying $f^{\ast} \mu = c \mu$ for some $c \in \qp^{\times}$. \smallskip

In \cite[Section 5.1, Lemma 5.1.1]{HamacherKim}, Hamacher and Kim construct a perfect closed subscheme
\begin{align}
\ig \subset \iggsp \times_{\shgsp} \shg.
\end{align}
Its $\ovfp$-points consist precisely of those isomorphisms $f:A_y[p^{\infty}] \to \mathbb{X}=A_x[p^{\infty}]$, such that the induced isomorphism on Dieudonn\'e-modules
\begin{align}
    f:\mathbb{D}_y \to \mathbb{D}_x
\end{align}
satisfies $s_{\alpha, \mathrm{cris},y}=f^{\ast}s_{\alpha, \mathrm{cris},x}$ for all $\alpha \in \mathscr{A}$. In particular, it follows that the map $\ig \to \shg$ factors through $\cb$ (where $\llbracket b \rrbracket = \llbracket b_x \rrbracket$). There is a tautological point $\tilde{x} \in \ig(\ovfp)$, mapping to $x \in \cb(\ovfp)$, which corresponds to the identity map $A_x[p^{\infty}]=\mathbb{X} \to \mathbb{X}$. Hamacher and Kim prove (see \cite[Proposition 5.1.2]{HamacherKim}) that $\ig$ is stable under the action of the subgroup 
\begin{align}
\operatorname{QIsog}_{G}(\mathbb{X}) \subset \operatorname{QIsog}(\mathbb{X}),
\end{align}
consisting of those quasi-isogenies $f:\mathbb{X} \dashrightarrow \mathbb{X}$ such that the induced isomorphism $f:\mathbb{D}_x[1/p] \to \mathbb{D}_x[1/p]$ on rational Dieudonn\'e modules satisfies
\begin{align*}
f^{\ast}s_{\alpha, \mathrm{cris},x} = s_{\alpha, \mathrm{cris},x}
\end{align*}
for all $\alpha \in \mathscr{A}$. Using our fixed isomorphism $\zpbr \otimes V_p \to \mathbb{D}_x$ sending $1 \otimes s_{\alpha}$ to $s_{\alpha,\mathrm{cris},x}$ for all $\alpha \in \mathscr{A}$, we can identify
\begin{align}
\operatorname{QIsog}_{G}(\mathbb{X}) \simeq J_b(\qp).
\end{align}
Hamacher and Kim prove in \cite[Lemma 5.1.4]{HamacherKim} that their construction of $\ig$ is compatible with changing the level at $p$ and that $G(\afp)$ acts on
\begin{align}
\iginf:=\varprojlim_{K^p \subset G(\afp)} \ig
\end{align}
in a way that makes the map $\iginf \to \shginf$ into a $G(\afp)$-equivariant map.
\subsection{The product formula} \label{Sec:Product} Let the notation be as in Section \ref{Sec:Igusa}; in particular, we have a fixed base point $x \in \shgbinf(\ovfp)$, an isomorphism $\zpbr \otimes V_p \to \mathbb{D}_x$ sending $1 \otimes s_{\alpha}$ to $s_{\alpha, \mathrm{cris},x}$ for all $\alpha \in \mathscr{A}$, giving rise to $b=b_x \in G(\qpbr)$, and a tautological point $\tilde{x} \in \ig(\ovfp)$.

In \cite[Section 4.2]{HamacherKim}, the authors introduce a perfect scheme $\xmub$ equipped with an action of the locally profinite group $J_b(\qp)$, called an affine Deligne--Lusztig variety. To describe its set of $\ovfp$-points, we need to introduce some notations.

Let $\mathcal{G}$ be the parahoric group scheme over $\zp$ with $\mathcal{G}(\zp)=K_p$. Let $\{\mu\}$ be the $G(\qpbar)$-conjugacy class of cocharacters of $G$ from Section \ref{Sec:HodgeCocharacters} and let $\{\nu\}=\sigma ( \{\mu^{-1}\})$. Moreover, let $\admu \subset \tilde{W}$ be the admissible set inside the affine Weyl group $\tilde{W}$ of $G$ (see \cite[Section 4.1.2]{HamacherKim}). Then there is a $J_b(\qp)$-equivariant bijection (see \cite[Remark 4.2.1]{HamacherKim})
\begin{align} \label{Eq:DescriptionADLVOVFP}
    \xmub(\ovfp) \to \{g \cdot \mathcal{G}(\zpbr) \in G(\qpbr) / \mathcal{G}(\zpbr) : g^{-1} b \sigma(g) \in \bigcup_{w \in \admu} \mathcal{G}(\zpbr) w \mathcal{G}(\zpbr) \},
\end{align}
where the right-hand side is equipped with the action of $J_b(\qp) \subset G(\qpbr)$ by left translation. It is explained in \cite[Section 6.7]{Zhou} that $b \in \mathcal{G}(\zpbr) w \mathcal{G}(\zpbr)$ for some $w \in \admu$; in particular $1 \in \xmub(\ovfp)$. For an element $y \in \xmub(\ovfp)$ of the form $g_y \cdot \mathcal{G}(\zpbr)$, we will write $b_y$ for the element $g_y^{-1} b \sigma(g_y) \in G(\qpbr)$. 
\begin{Lem} \label{Lem:CentralLeavesShtukas}
Two points $y,y' \in \xmub(\ovfp)$ satisfy $\llbracket b_y \rrbracket = \llbracket b_{y'} \rrbracket$ if and only if $y'$ is in the $J_b(\qp)$-orbit of $y$.
\end{Lem}
\begin{proof}
This is a straightforward consequence of the definition of $J_b(\qp)$ and the description of $\xmub(\ovfp)$ in \eqref{Eq:DescriptionADLVOVFP}. \iffalse
Write $y=g_y \cdot \mathcal{G}(\zpbr)$. Note that $h \in J_b(\qp)$ satisfies $h^{-1} b \sigma(h)=b$ per definition, and so 
\begin{align}
    b_{h \cdot y} = g_y^{-1} h^{-1} b \sigma(h) \sigma(g_y) = g_y^{-1} b \sigma(g_y) = b_{y}.
\end{align}
For the converse direction we write $y'=g_y' \cdot \mathcal{G}(\zpbr)$. If there is an element $g \in G(\zpbr)$ such that
\begin{align}
    g^{-1} g_y^{-1} b \sigma(g_y) \sigma(g) = g_{y'}^{-1} b \sigma(g_{y'}),
\end{align}
then
\begin{align}
    g_{y'} g^{-1} g_y^{-1} b \sigma(g_y) \sigma(g) \sigma(g_{y'})^{-1}=  b ,
\end{align}
and so $h=g_y g g_{y'}^{-1} \in J_b(\qp)$. But $g_y g \mathcal{G}(\zpbr) = g_y \mathcal{G}(\zpbr)$ and so after replacing $g_y$ by $g_y g$ we see that
\begin{align}
    h g_{y}' = g_{y}.
\end{align}
\fi
\end{proof}
The stabiliser $H_y$ of a point $g_y \mathcal{G}(\zpbr)=y \in \xmub(\ovfp)$ is given by the intersection of $J_b(\qp)$ with the compact open subgroup $g_y \mathcal{G}(\zpbr) g_y^{-1}$ and therefore $H_y \subset J_b(\qp)$ is a compact open subgroup. Left multiplication by $g_y$ induces an isomorphism $J_b \to J_{b_y}$ that identifies $H_y$ with $H_{b_y}:=J_{b_y}(\qp) \cap \mathcal{G}(\zpbr)$. 
\begin{Lem} \label{Lemma:OrbitClosedADLV}
Let $y \in \xmub(\ovfp)$ and let $\operatorname{Orb}(y) \subset \xmub(\ovfp)$ be the $J_b(\qp)$-orbit of $y$. Then $\operatorname{Orb}(y)$ is Zariski closed inside $\xmub(\ovfp)$.
\end{Lem}
\begin{proof}
The stabiliser $H_Z$ of an irreducible component $Z$ of $\xmub$ containing $y$ is a parahoric subgroup of $J_b(\qp)$ by \cite[Theorem 3.1.1]{ZhouZhu}, and hence contains $H_y$ as a finite index subgroup. Thus the fibers of 
\begin{align}
    J_b(\qp)/H_y \to J_b(\qp)/H_Z
\end{align}
are finite, or equivalently, the orbit of $y$ under $J_b(\qp)$ intersects each irreducible component in the orbit of $Z$ in finitely many points. We conclude that $\operatorname{Orb}(y)$ intersects each irreducible component $Z$ of $\xmub$ in finitely many points.

This implies that $\operatorname{Orb}(y) \cap Z$ is closed in $Z$ for all irreducible components $Z$ of $\xmub$. Since $\xmub$ has an open cover by perfections of finite type $\ovfp$-schemes, which have finitely many irreducible components, we deduce from this that $\operatorname{Orb}(y)$ is closed inside all of $\xmub$. 
\end{proof}

\subsubsection{} \textbf{From now on, we will assume that $G_{\qp}$ is quasi-split and that $K_p$ is a very special parahoric subgroup.} In \cite[Section 5.2, Theorem 5.2.6]{HamacherKim}, Hamacher and Kim construct a $J_b(\qp)$-invariant and $G(\afp)$-equivariant map, known as the product formula: 
\begin{align}
\pi_{\infty}:\iginf \times \xmub \to \shgbinf^{\mathrm{perf}},
\end{align}
where $\mathrm{perf}$ denotes the perfection of a scheme. The construction of this map relies on \cite[Axiom A]{HamacherKim}, which is true under our assumptions by \cite[Theorem 2]{vH}, see also \cite[Corollary 1.6]{GleasonLimXu}. Furthermore, Hamacher and Kim prove that this map is a $J_b(\qp)$-torsor for the pro-\'etale topology, see \cite[Corollary 5.2.7]{HamacherKim}. We will also write $\pi_{\infty}$ for the induced map at level $K^p \subset G(\afp)$. It follows from the construction that $\pi_{\infty}(\tilde{x},1)=x$, where $\tilde{x} \in \ig(\ovfp)$ is the tautological point as in the beginning of Section \ref{Sec:Product}.
\begin{Prop} \label{Prop:ProductFormulaLemma}
If $y \in \xmub(\ovfp)$ is a point with stabiliser $H_y \subset J_b(\qp)$, then the map
\begin{align}
\ig \times \operatorname{Orb}(y) \to \shgb^{\mathrm{perf}}
\end{align}
factors through $\cby^{\mathrm{perf}} \subset \shgb^{\mathrm{perf}}$. Moreover, the following diagram is Cartesian:
\begin{equation}
    \begin{tikzcd}
    \ig \times \operatorname{Orb}(y) \arrow{d} \arrow{r} & \ig \times \xmub \arrow{d} \\
    \cby^{\mathrm{perf}} \arrow{r} & \shgb^{\mathrm{perf}}.
    \end{tikzcd}
\end{equation}
\end{Prop}
\begin{proof}
We start by pointing out that both $\ig$ and $\xmub$ have a dense set of $\ovfp$-points. This is true for $\ig$ because it is an inverse limit of perfections of finite type $\ovfp$-schemes along finite \'etale transition maps, and for $\xmub$ since it has an open cover by perfections of finite type $\ovfp$-schemes. Then, since perfect schemes are reduced, it suffices to prove the first claim on the level of $\ovfp$-points. In this case, what we want to prove is that for $w=\pi_{\infty}(z,y)$ we have an equality $\llbracket b_w \rrbracket=\llbracket b_y \rrbracket$.

By \cite[Lemma 5.2.5]{HamacherKim}, the restriction of $\pi_{\infty}$ to $\{z \} \times \xmub$, for $z \in \ig(\ovfp)$, can be identified with the so-called Rapoport--Zink uniformisation map $\Theta_{z}:\xmub \to \shgb$; see \cite[Section 4.3]{HamacherKim}. The relevant result for the Rapoport--Zink uniformisation map is \cite[Proposition 6.5]{Zhou}, see also the discussion in \cite[Section 8. Axiom 4]{Zhou}. \smallskip

It follows from this discussion that the diagram is Cartesian on the level of $\ovfp$-points, since $\llbracket b_y \rrbracket =\llbracket b_{y'} \rrbracket$ if and only if $y$ and $y'$ are in the same $J_b(\qp)$-orbit in $\xmub(\ovfp)$; this is Lemma \ref{Lem:CentralLeavesShtukas}. We claim that this implies that the diagram is Cartesian on the level of perfect schemes. Indeed, both the fiber product and the image of $\ig \times \operatorname{Orb}(y) $ are reduced closed subschemes of $\ig \times \xmub$, see Lemma \ref{Lemma:OrbitClosedADLV}, and reduced closed subschemes are determined by their (dense sets of) $\ovfp$-points. 
\end{proof}
\begin{Cor} \label{Cor:IgusaVarietiesAsTorsors}
For $y \in \xmub(\ovfp)$, the natural map
\begin{align}
    \pi_{\infty}(-,y):\ig \times \{y\} \to \cby
\end{align}
is a pro-\'etale $H_y$-torsor.
\end{Cor}
\begin{proof}
It follows from Proposition \ref{Prop:ProductFormulaLemma} that
\begin{align}
\ig \times \operatorname{Orb}(y) \to \cby^{\mathrm{perf}}
\end{align}
is a pro-\'etale $J_b(\qp)$-torsor, and therefore the induced map
\begin{align*}
\ig \times \{y\} \to \cby^{\mathrm{perf}}
\end{align*}
is a pro-\'etale $H_y=\operatorname{Stab}_{J_b(\qp)}(y)$-torsor.
\end{proof}
Since the product formula is $G(\afp)$-equivariant it follows that the maps $\ig \times \{y\} \to \cby$ are compatible with changing the level away from $p$. Therefore there is an induced $G(\afp)$-equivariant map
\begin{align}
    \iginf \to \cbyinf^{\mathrm{perf}},
\end{align}
which is again a pro-\'etale $H_y$-torsor. 
\begin{Lem} \label{Lem:Continuity}
The locally profinite group $J_b(\qp)$ acts continuously on $\pi_0(\ig)$.
\end{Lem}
\begin{proof}
The Igusa variety $\ig \to \cby^{\mathrm{perf}}$ is a pro-\'etale $H_y$-torsor by Corollary \ref{Cor:IgusaVarietiesAsTorsors} and therefore $H_y$ acts continuously on $\pi_0(\ig)$, see \cite[Lemma 3.1.4, Corollary 3.1.5]{OrdinaryHO}. The lemma now follows because $H_y \subset J_b(\qp)$ is an open subgroup, and thus if the $H_y$ action is continuous, then so is the $J_b(\qp)$-action.
\end{proof}
\subsection{Connected components and the product formula} \label{Sec:ProductFormulaAndPiNaught} We start with a lemma about connected components. Let $\nu:G \to \gab$ be the natural map and \textbf{assume from now on that $\gder$ is simply connected.} 
\begin{Lem} \label{Lem:ConnectedComponents}
The map
\begin{align}
    \pi_0(\mathbf{Sh}_{K_p}(G,X)) \to \pi_0(\mathbf{Sh}_{K_pK^p}(G,X))
\end{align}
is a $\nu(K^p)$-torsor, compatible with the action of $K^p$ on $\mathbf{Sh}_{K_p}(G,X)$.
\end{Lem}

\begin{proof}
Let $\gab(\mathbb{R})^{\dagger}$ be the image of $Z_G(\mathbb{R}) \to \gab(\mathbb{R})$ and let $\gab(\mathbb{Q})^{\dagger}$ be its intersection with $\gab(\mathbb{Q})$. 
By \cite[Theorem 5.17]{Milne}, there is a natural identification
\begin{align}
    \pi_0(\mathbf{Sh}_{K^p K_p}(G,X)) = \gab(\mathbb{Q})^{\dagger} \backslash \gab(\mathbb{A}_f) / \nu(K^pK_p),
\end{align}
compatible with changing $K^p$, where $\nu:G(\mathbb{A}_f) \to \gab(\mathbb{A}_f)$ is the natural map. Since $(G,X)$ is of Hodge type, axiom SV5 of op. cit. is satisfied for $(G,X)$ and therefore also for $(\gab, X^{\mathrm{ab}})$. Therefore it follows from \cite[Lemma 4.20]{Milne} that there is a bijection
\begin{align}
    \pi_0(\mathbf{Sh}_{K_p}(G,X)) &= \gab(\mathbb{Q})^{\dagger} \backslash \gab(\mathbb{A}_f) / \nu(K_p).
\end{align}
We see that the map 
\begin{align}
    \pi_0(\mathbf{Sh}_{K_p}(G,X)) \to \pi_0(\mathbf{Sh}_{K_pK^p}(G,X))
\end{align}
is a pro-\'etale $\nu(K^p)$-torsor, compatible with the action of $K^p$ on $\mathbf{Sh}_{K_p}(G,X)$.
\end{proof}
\begin{Lem} \label{Lem:ConnectedComponentSpecialFibre}
For each finite extension $F$ of the reflex field $E$ and any place $w$ of $F$ extending $v$, the natural maps
\begin{align}
    \pi_0(\mathbf{Sh}_{K^pK_p}(G,X) \otimes_E F) \leftarrow \pi_0(\mathscr{S}_{K^pK_p} \otimes_{\mathcal{O}_{E,(v)}} \mathcal{O}_{F,(w)}) \to \pi_0(\mathscr{S}_{K^pK_p} \otimes_{\mathcal{O}_{E,(v)}} k(w))
\end{align}
are isomorphisms. 
\end{Lem}
\begin{proof}
The Shimura variety $\shg$ is locally integral since $K_p$ is very special, see \cite[Corollary 4.6.26]{KisinPappas}. The result now follows from \cite[Corollary 4.1.11]{MP}.
\end{proof}

\subsubsection{} Let $\pi_1(G)$ be the algebraic fundamental group of $G \otimes \qpbar$, see \cite[Definition 3]{Borovoi}. Let $\pi_1(G)_I$ be the coinvariants under the action of the inertia group $I=\gal(\qpbar/\mathbb{Q}_p^{\mathrm{ur}})$, and let $\pi_1(G)_I^{\sigma}$ be the invariants of $\pi_1(G)_I$ under Frobenius. Recall the functorial Kottwitz homomorphism from \cite[Chapter 11.5]{KalethaPrasad}
\begin{align}
    \kappa:G(\qpbr) \to \pi_1(G)_I.
\end{align}
By \cite[Lemma 3.4.2]{vH}, the restriction of $\kappa$ to $G(\qp)$ identifies 
\begin{align}
    \frac{G(\qp)}{\gder(\qp) K_p} \to \frac{\gab(\qp)}{\nu(K_p)} \simeq \pi_1(G)_I^{\sigma}.
\end{align}
It follows from the proof of Lemma \ref{Lem:ConnectedComponents} that the abelian group $\gab(\qp)/\nu(K_p)=\pi_1(G)_I^{\sigma}$ acts on $\pi_0(\mathbf{Sh}_{K_p}(G,X))$, and that the action commutes with that of $G(\afp)$. Using the isomorphisms from Lemma \ref{Lem:ConnectedComponentSpecialFibre}, this gives an action of $\pi_1(G)_I^{\sigma} \times G(\afp)$ on $\pi_0(\shginf)$.
\begin{Lem} \label{Lem:UniformisationAndPiNaught}
If $\tilde{x} \in \ig(\ovfp)$ is the tautological point as in the beginning of Section \ref{Sec:Product}, then 
\begin{align}
    \Theta_{\tilde{x}}:=\pi_{\infty}(\tilde{x},-):\xmub \to \shgb \to \pi_0(\shg)
\end{align}
is $J_b(\qp)$-equivariant, where $J_b(\qp)$ acts on the target via the natural map $J_b(\qp) \to \gab(\qp) \to \pi_1(G)_I^{\sigma}$.
\end{Lem}
\begin{proof}
This is \cite[Proposition 3.4.5]{vH}, where $\Theta_{\tilde{x}}$ is denoted by $i_x$. Indeed, it is shown there that the point $i_x(y)$ lies in the connected component $\kappa(y) \cdot i_x(1)$, where $\kappa:\pi_0(\xmub) \to \pi_1(G)_I^{\sigma}$ is the natural map introduced in \cite[Section 3.4.4]{vH}. This natural map is $J_b(\qp)$-equivariant for the $J_b(\qp)$ action on $\pi_1(G)_I^{\sigma}$ coming from the natural map $J_b(\qp) \to \gab(\qp) \to \pi_1(G)_I^{\sigma}$; this proves the lemma. 
\end{proof}

\subsection{Distinguished central leaves} \label{Sec:Distinguished} Let $\tilde{W}$ be the affine Weyl group of $G$. Recall from \cite[Lemma 2.2.8, Definition 2.2.9]{KimLeaves} that to an element $w \in \tilde{W}$ we can associate a well-defined $\mathcal{G}(\zpbr)$-$\sigma$-conjugacy class $\llbracket w \rrbracket$. Recall from \cite[Section 1.2.10]{ShenYuZhang} the notion of a \emph{$\sigma$-straight} element of $\tilde{W}$.
\begin{Def} \label{Def:Distinguished}
We call $y \in \xmub(\ovfp)$ \emph{distinguished} if $\llbracket b_y \rrbracket=\llbracket w \rrbracket$ for some $\sigma$-straight element $w \in \admu$. We call a central leaf $\cb$ \emph{distinguished} if $\llbracket b \rrbracket=\llbracket w \rrbracket$ for some $\sigma$-straight element $w \in \admu$.
\end{Def}
\begin{Lem}
If $\cb$ is distinguished, then $\cb \subset \shgb$ is an Ekedahl--Kottwitz--Oort--Rapoport (EKOR) stratum in the sense of \cite[Theorem 3.4.12]{ShenYuZhang}.
\end{Lem}
\begin{proof}
Write $\llbracket b \rrbracket = \llbracket w \rrbracket$ with $w \in \admu$ a $\sigma$-straight element. By the proof of \cite[Theorem 6.17]{He-Rapoport}, there is an element $v \in \tilde{W}$ such that $w':= v w \sigma(v)^{-1}$ lies in $^{K} \admu :={}^K \tilde{W} \cap \admu$ and $w'$ is again a $\sigma$-straight element. (The subset ${}^K \tilde{W} \subset \tilde{W}$ is introduced in \cite[page 3125]{ShenYuZhang}, but its precise meaning is not relevant for us.) Then $\llbracket w \rrbracket=\llbracket w' \rrbracket$ and the result now follows from \cite[Corollary 3.4.14]{ShenYuZhang}, see \cite[paragraph after Theorem 1.3.5]{ShenYuZhang}.
\end{proof}
\begin{Lem} \label{Lem:ExistenceDistinguished}
There exists a distinguished central leaf $\cb \subset \shgb$. 
\end{Lem}
\begin{proof}
This follows from \cite[Theorem 1.3.5, paragraph after Theorem 1.3.5]{ShenYuZhang}.
\end{proof}
\begin{Lem} \label{Lem:IsParahoric}
If $y$ is distinguished, then there are parahoric subgroups $J$ and $J'$ of $J_b(\qp)$ such that $J \subset H_y \subset J'$.
\end{Lem}
\begin{proof}
The stabiliser $H_y \subset J_b(\qp)$ of the point $y \in \xmub(\ovfp)$ is contained in the stabiliser $H_Z$ of an irreducible component $Z$ containing $y$. The subgroup $H_Z \subset J_b(\qp)$ is a parahoric subgroup of $J_b(\qp)$ by \cite[Theorem 3.1.1]{ZhouZhu}, thus we can take $J'=H_Z$.

If $y=g_y \cdot \mathcal{G}(\zpbr)$ is distinguished, then we can find a representative of $g_y$ such that $g_y^{-1} b \sigma(g_y) = w$, where $w \in \tilde{W}$ is a $\sigma$-straight element. Let $\mathcal{I}(\zpbr)$ be a standard Iwahori subgroup containing $\mathcal{G}(\zpbr)$, then as explained in Section 5.3 of \cite{He-Zhou}, the $\sigma$-centraliser $J_w(\qp) \subset G(\qpbr)$ intersects $\mathcal{I}(\zpbr)$ in an Iwahori subgroup of $J_w(\qp)$. After conjugating by $g_y$, we see that $H_y$ contains an Iwahori subgroup $J$ of $J_b(\qp)$.
\end{proof}
\subsection{Some results on parahoric group schemes} \label{Sec:Parahoric} Let $K_p \subset G(\qp)$ be a parahoric subgroup and let $J_p \subset J_b(\qp)$ be a parahoric subgroup. Let $\mathcal{Z}$ denote the connected N\'eron model of $\gab_{\qp}$ over $\zp$ and let $\gab(\zp):=\mathcal{Z}(\zp)$. The goal of this section is to prove the following results, the second of which is well-known.
\begin{Prop} \label{Prop:ImageParahoricI}
The image of $J_p \to J_b(\qp) \to \gab(\qp)$ is equal to $\gab(\zp)$.
\end{Prop}
\begin{Prop} \label{Prop:ImageParahoricII}
The image of $K_p \to G(\qp) \to \gab(\qp)$ is equal to $\gab(\zp)$.
\end{Prop}
We will refer to \cite[Chapter 4]{KalethaPrasad} for conventions regarding Bruhat--Tits buildings, parahoric subgroups and parahoric group schemes. In particular, we do not work with the extended Bruhat--Tits buildings in this paper. Thus, for a parahoric subgroup $K_p \subset G(\qp)$, there is a point $x$ of the Bruhat--Tits building $\mathcal{B}(G_{\qp})$ such that $K_p=\mathcal{G}_{x}^0(\zp)$. Here the Bruhat--Tits parahoric group scheme $\mathcal{G}_{x}^0$ is a smooth affine group scheme which is the relative identity component of the Bruhat--Tits stabiliser group scheme $\mathcal{G}_{x}^1$. We will similarly write $J_p=\mathcal{J}_{x_J}^0(\zp)$ for a point $x_J$ of the Bruhat--Tits building of $\mathcal{B}(J_b)$ of $J_b$.\smallskip 

Recall that $J_b'$ is a connected reductive group of $J_b$ whose derived group is isomorphic to $\jbder$, see Lemma \ref{Lem:JbpConnected}. Since the buildings of $\gder_{\qp}$ and $G_{\qp}$ respectively of $J_b'$ and $J_b$ are equal, see \cite[bottom of page 343]{KalethaPrasad}, the inclusions $\gder_{\qp} \to G_{\qp}$ and $J_b' \to J_b$ induce morphisms
\begin{align}
    \mathcal{G}^{\mathrm{der},1}_{x} \to \mathcal{G}_{x}^1 \\
    \mathcal{J}^{',1}_{x_J} \to \mathcal{J}_{x_J}^1.
\end{align}
By \cite[Proposition 2.4.8]{KisinZhou}, these maps are closed immersions since $G_{\qp}$ splits over a tamely ramified extension of $\qp$ (and thus the same holds for $J_{b}$). Note that the superscript $1$ in our notation corresponds to the tilde in the notation of \cite[Proposition 2.4.9]{KisinZhou}.
\begin{Lem}\label{Lem:Connected}
The preimage of $\mathcal{J}_{x_J}^0$ in $\mathcal{J}^{',1}_{x_J}$ has connected special fibre. In particular, it is equal to $\mathcal{J}^{',0}_{x_J}$.
\end{Lem}
\begin{proof}
Recall that there is a functorial Kottwitz homomorphism
\begin{align}
    \kappa:J_{b}'(\qpbr) \to \pi_1(J_b')_I.
\end{align}
Here $\pi_1(J_b')$ is the algebraic fundamental group of $J_b'$ and $I$ is the inertia group. In particular, there is a commutative diagram
\begin{equation} \label{Eq:KottwitzHomomorphism}
    \begin{tikzcd}
    \mathcal{J}^{',1}_{x_J}(\zpbr) \arrow{d} \arrow{r} & \mathcal{J}_{x_J}^1(\zpbr)  \arrow{d} \\
    \pi_1(J_b')_I \arrow{r} & \pi_1(J_b)_I.
    \end{tikzcd}
\end{equation}
By \cite[Corollary 11.6.3]{KalethaPrasad}, the images of the horizontal maps in \eqref{Eq:KottwitzHomomorphism} can be identified with the component groups of the special fibers of $\mathcal{J}^{',1}_{x_J, \zpbr}$ and $\mathcal{J}_{x_J, \zpbr}^1 $, respectively. Thus to show that the inverse image of $\mathcal{J}_{x_J}^0$ in $\mathcal{J}^{',1}_{x_J}$ is connected, it would be enough to show that $\pi_1(J_b')_I \to \pi_1(J_b)_I$ is injective. 

To prove this, we need to recall the definition of the algebraic fundamental group. Let $T \subset J_{b,\qpbr}$ be a maximal torus defined over $\qp$ that is the centraliser of a maximal $\qpbr$-split torus, let $T'$ be its intersection with $J_{b,\qpbr}'$ and $T^{\mathrm{der}}$ its intersection with $J_{b,\qpbr}^{\mathrm{der}}$. Then the short exact sequences defining the algebraic fundamental groups are given by 
\begin{equation}
    \begin{tikzcd}
    0 \arrow{r}& X_{\ast}(T^{\mathrm{der}}) \arrow[d, equals] \arrow{r} & X_{\ast}(T') \arrow{d} \arrow{r} & \pi_1(J_b') \arrow{d} \arrow{r} & 0 \\
    0 \arrow{r} & X_{\ast}(T^{\mathrm{der}}) \arrow{r} & X_{\ast}(T) \arrow{r} & \pi_1(J_b) \arrow{r} & 0,
    \end{tikzcd}
\end{equation}
where $X_{\ast}$ denotes the cocharacter lattice of a torus, equipped with its natural action of $I$. Recall that $J_{b,\qpbr} \simeq M_{b,\qpbr} \subset G_{\qpbr}$ is a standard Levi. In particular, $T \subset J_{b,\qpbr} \subset G_{\qpbr}$ is also the centraliser of a maximal $\qpbr$-split torus in $G_{\qpbr}$. Then $X_{\ast}(T^{\mathrm{der}})$ and $X_{\ast}(T')$ are both induced Galois modules by \cite{BTII}*{Proposition 4.4.16}, since $\gder$ and $\jbder$ are simply connected (see \cite{MalleTesterman}*{Proposition 12.14}). This means that $X_{\ast}(T^{\mathrm{der}})_I$ and $X_{\ast}(T')_I$ are torsion-free. In particular, the maps
\begin{align}
    X_{\ast}(T^{\mathrm{der}})_I \to X_{\ast}(T')_I \text{ and }
    X_{\ast}(T^{\mathrm{der}})_I \to X_{\ast}(T)_I,
\end{align}
which are injective after tensoring with $\mathbb{Q}$, are injective. This gives us a diagram of short exact sequences
\begin{equation}
    \begin{tikzcd}
    0 \arrow{r}& X_{\ast}(T^{\mathrm{der}})_I \arrow[d, equals] \arrow{r} & X_{\ast}(T')_I \arrow{d}{b} \arrow{r} & \pi_1(J_b')_I \arrow{d}{c} \arrow{r} & 0 \\
    0 \arrow{r} & X_{\ast}(T^{\mathrm{der}})_I \arrow{r} & X_{\ast}(T)_{I} \arrow{r} & \pi_1(J_b)_I \arrow{r} & 0, 
    \end{tikzcd}
\end{equation}
and the snake lemma gives us an isomorphism $\ker c \simeq \ker b$. Because $X_{\ast}(T')_I$ is torsion free, it follows that $\ker c$ is trivial, since $c$ is injective after tensoring with $\mathbb{Q}$.
\end{proof}
\begin{Lem} \label{Lem:ConnectedII}
We have an equality $\mathcal{G}^{\mathrm{der},1}_{x}=\mathcal{G}^{\mathrm{der},0}_{x}$.
\end{Lem}
\begin{proof}
This can be proved as in the proof of Lemma \ref{Lem:Connected}, using the fact that $\pi_1(\gder)=0$ because $\gder$ is semisimple and simply connected, see \cite[Example 1.6]{Borovoi}.
\end{proof}
Let $\mathcal{Z}$ denote the connected N\'eron model of $\gab_{\qp}$ as before.
\begin{Lem} \label{Lem:ParahoricAbelianQuotient}
There is a short exact sequence
\begin{align} 
    &1 \to \mathcal{J}^{',0}_{x_J} \to \mathcal{J}^{0}_{x_J} \to \mathcal{Z} \to 1.
\end{align} 
\end{Lem}
\begin{proof}
There is a natural group homomorphism from $\mathcal{J}^{0}_{x_J}$ to the lft-N\'eron model of $\gab_{\qp}$, by the universal property of the lft-N\'eron model of tori. Since $\mathcal{J}^{0}_{x_J}$ has connected special fiber, it follows that this group homomorphism lands in $\mathcal{Z}$. To show that the map $\mathcal{J}^{0}_{x_J} \to \mathcal{Z}$ is surjective, we argue as in \cite{KisinPappas}*{Proposition 1.1.4}. There, it is argued that there exists some tamely ramified maximal torus $T$ of $J_b$ whose connected Néron model $\mathcal{T}$ is contained in $\mathcal{J}^{0}_{x_J}$. It then follows from \cite[Lemma 6.7]{PappasRapoportAffineFlag} that $\mathcal{T} \to \mathcal{Z}$ is surjective.

We are left to show that $\mathcal{J}^{',0}_{x_J}$ is the kernel of the map $\mathcal{J}^{0}_{x_J} \to \mathcal{Z}$. This can be checked on $\zpbr$-points, and there we consider the following diagram
\begin{equation}
    \begin{tikzcd}
     & \mathcal{J}^{',0}_{x_J}(\zpbr) \arrow[d, hook] \arrow{r} & \mathcal{J}^{0}_{x_J}(\zpbr) \arrow{r} \arrow[d, hook] & \mathcal{Z}(\zpbr) \arrow{r} \arrow[d, hook] &1  \\ 
     1 \arrow{r} & J_b'(\qpbr) \arrow{r} & J_b(\qpbr) \arrow{r} & \gab(\breve{\mathbb{Q}}_p) \arrow{r} & 1.
    \end{tikzcd}
\end{equation}
The exactness on the left of the top row follows from the fact that the leftmost square is Cartesian, see Lemma \ref{Lem:Connected}, and so we are done. 
\end{proof}
\begin{Lem} \label{Lem:ParahoricAbelianQuotientII}
There is a short exact sequence
\begin{align} \label{Eq:Parahoric}
    &1 \to \mathcal{G}^{\mathrm{der},0}_{x} \to \mathcal{G}_{x}^0 \to \mathcal{Z} \to 1.
\end{align} 
\end{Lem}
\begin{proof}
This can be established by a similar, but simpler version of the proof of Lemma \ref{Lem:ParahoricAbelianQuotient}.
\end{proof}
\begin{proof}[Proof of Proposition \ref{Prop:ImageParahoricI}]
Taking $\zp$-points of the short exact sequence from Lemma \ref{Lem:ParahoricAbelianQuotient}, we see that it suffices to show that $H^1(\zp, \mathcal{J}^{',0}_{x_J})=0$. But this follows from Lang's Lemma, since $\mathcal{J}^{',0}_{x_J}$ is a smooth group scheme with connected special fiber.
\end{proof}
The proof of Proposition \ref{Prop:ImageParahoricII} is the same as the Proof of Proposition \ref{Prop:ImageParahoricI}, using Lemma \ref{Lem:ParahoricAbelianQuotientII} instead of Lemma \ref{Lem:ParahoricAbelianQuotient}.

\section{Geometric Monodromy} \label{Sec:Monodromy}
Let the notation be as in Section \ref{Sec:Notation} and recall that we have assumed that $K_p$ is a very special parahoric and that $\gder$ is simply connected. Moreover, recall that we have a fixed base point $x \in \shgbinf(\ovfp)$ with tautological point $\tilde{x} \in \ig(\ovfp)$, and an isomorphism $\zpbr \otimes V_p \to \mathbb{D}_x$ sending $1 \otimes s_{\alpha}$ to $s_{\alpha, \mathrm{cris},x}$ for all $\alpha \in \mathscr{A}$, giving rise to $b=b_x \in G(\qpbr)$. By Lemma \ref{Lem:ExistenceDistinguished} we may choose $x$ to lie in a distinguished central leaf, and \textbf{we will assume from now on that $\cb$ is distinguished.} Then the central leaf $\cb$ is equal to $\cbynaught$ for $1 \in \xmub(\ovfp)$, and we write $H:=H_{1}$. 

Consider the product decomposition
$G^{\mathrm{ad}}=\prod_{i=1}^n G_i$ into simple groups over $\mathbb{Q}$, which induces maps
\begin{align} \label{eq:AdmissibleAdjoint}
    B(G_{\qp}) \to B(G^{\mathrm{ad}}_{\qp}) \to \prod_{i=1}^n B(G_{i,\qp}).
\end{align}
For an element $[b] \in B(G_{\qp})$ we will write $[b_i]$ for its image in $B(G_{i,\qp})$ under this map. Recall from \cite[Def. 5.3.2]{KretShin} that an element $[b] \in B(G_{\qp})$ is called \emph{$\mathbb{Q}$-non-basic} if $[b_i]$ is non-basic for all $i$. A Newton stratum $\shgb$ is called \emph{$\mathbb{Q}$-non-basic} if $[b]$ is $\mathbb{Q}$-non-basic. \textbf{Assume from now on that $[b]$ is $\mathbb{Q}$-non-basic.}

\subsection{\texorpdfstring{$\ell$}{ell}-adic monodromy} Recall that $\shginf \to \shg$ is a pro-\'etale $K^p$-torsor. Let $\ell \not=p$ be a prime number, let $K_{\ell}$ be the image of the projection $K^p \to G(\ql)$, and let $\shginfl \to \shg$ be the induced pro-\'etale $K_{\ell}$-torsor.

Let $\pi:A \to \shg$ be the abelian scheme pulled back from the universal abelian variety up to prime-to-$p$ isogeny over $\shgsp$ along $\shg \to \shgsp$. The local system $R^1 \pi_{\ast} \ql$ corresponds to the pro-\'etale $\operatorname{GL}_V(\ql)$-torsor over $\shg$ given by pushing out $\shginfl \to \shg$ via $K_{\ell} \to G(\ql) \to  \operatorname{GL}_V(\ql)$. The following lemma is well-known, but we've included a proof for the benefit of the reader.
\begin{Lem} \label{Lem:MonodromyEll}
Let $\shg^{\circ} \subset \shg$ be a connected component and let $a \in \shg^{\circ}(\ovfp)$. Then the Zariski closure of the monodromy representation
\begin{align}
    \pi_1^{\text{\'et}}(\shg^{\circ},a) \to K_{\ell} \to \operatorname{GL}_V(\ql)
\end{align}
corresponding to $R^1 \pi_{\ast} \mathbb{Q}_{\ell}$ is equal to $\gder_{\ql}$.
\end{Lem}
\begin{proof}
It follows from Lemma \ref{Lem:ConnectedComponents} and Lemma \ref{Lem:ConnectedComponentSpecialFibre} that 
\begin{align}
    \pi_0(\shginf) \to \pi_0(\shg)
\end{align}
is a pro-\'etale $\nu(K^p)$-torsor. In particular, the stabiliser of a connected component of $\pi_0(\shginf)$ under the action of $K^p$ is equal to $K^p \cap \gder(\afp)$. If we pass to the induced $K_{\ell}$-torsor
\begin{align}
    \shginfl:=\shginf \times_{K^p} K_{\ell} \to \shg,
\end{align}
then the action of $K_{\ell}$ on a connected component has stabiliser equal to $K_{\ell} \cap \gder(\ql)$. By profinite Galois theory for schemes, cf. \cite[Section 3.1.10, 3.1.11]{OrdinaryHO}, this stabilizer can be identified with the image of the monodromy representation $\pi_1^{\text{\'et}}(\shg^{\circ},a) \to K_{\ell}$. Thus the image is equal to $K_{\ell} \cap \gder(\ql)$, which is a compact open subgroup of $\gder(\ql)$. We thus see that the image has Zariski closure $\gder_{\ql}$ in $\operatorname{GL}_{V, \ql}$.
\end{proof}
To proceed, we will make the following assumption:
\begin{Assump} \label{Assump:1}
If $[b]$ is $\mathbb{Q}$-non-basic, then for any distinguished central leaf $\cby \subset \shgb$ the natural map
\begin{align} \label{eq:LeafIso}
    \cby \to \shg 
\end{align}
induces a bijection on connected components for all $K^p \subset G(\afp)$.
\end{Assump}
\begin{Rem} This assumption holds true if either $G_{\qp}$ splits over an unramified extension or if $\mathbf{Sh}_{K^pK_p}(G,X)$ is proper, by \cite{vH}*{Theorem 4.5.2} (see \cite[Remark 4.3.2]{vH}). More generally, the assumption holds when \cite[Conjecture 4.3.1]{vH} holds for $\shg$. A proof of \cite[Conjecture 4.3.1]{vH} under the assumption that $\gad$ is $\mathbb{Q}$-simple will appear in the forthcoming PhD thesis of Shengkai Mao, see \cite[Corollary 1.6 ]{MaoCompact}. Moreover, the assumption holds unconditionally when $[b]$ is the $\mu$-ordinary element, see Remark \ref{Rem:MuOrdinary}.
\end{Rem}
Let us denote by $J_{b}(\qp)''$ the kernel of the map $J_b(\qp) \to \gab(\qp) \to \pi_1(G)_I^{\sigma}$, note that $J_b''(\qp) \supset J_b'(\qp)$. Moreover, since $\nu(K_p)=\gab(\zp)$ is the kernel of $\gab(\qp) \to \pi_1(G)_I^{\sigma}$ by Proposition \ref{Prop:ImageParahoricII}, the group $J_b''(\qp)$ is just the inverse image of $\gab(\zp) \subset \gab(\qp)$ under the natural map $J_b(\qp) \to \gab(\qp)$.
\begin{Lem} \label{Lem:JbppActs}
If Assumption \ref{Assump:1} holds, then for $y \in \xmub(\ovfp)$ the map
\begin{align} \label{Eq:ProductFixedADLV}
    \pi_{\infty}(-,y):\pi_0(\ig) \times \{y\} \to \pi_0(\shg)
\end{align}
is $J_b(\qp)$-equivariant, where $J_b(\qp)$ acts on $\pi_0(\shg)$ via the inverse of the natural map $J_b(\qp) \to \gab(\qp) \to \pi_1(G)_I^{\sigma}$. In particular, the group $J_b''(\qp)$ acts on the fibers of \eqref{Eq:ProductFixedADLV}.
\end{Lem}
\begin{proof}
We will prove the result for the map
\begin{align}
    \pi_{\infty}(-,y):\pi_0(\iginf) \times \{y\} \to \pi_0(\shginf).
\end{align}
The map only depends on the connected component containing $y$, and thus it suffices to prove the result for one point in each connected component of $\xmub$. Since the map $\pi_{\infty}:\iginf \times \xmub \to \shginf$ is $J_b(\qp)$-invariant, it suffices to prove it for one point in each $J_b(\qp)$-orbit of connected components of $\xmub$. Since $J_b(\qp)$ acts transitively on $\pi_0(\xmub)$ by \cite[Theorem A.1.3]{vH}, it is enough to prove the result for $y=1$. \smallskip 

By Lemma \ref{Lem:UniformisationAndPiNaught} we know that for $g \in J_b(\qp)$ we have
\begin{align}
    \pi_{\infty}(\tilde{x}, g \cdot 1) = \kappa(g) \cdot \pi_{\infty}(\tilde{x}, 1),
\end{align}
where $\kappa(g)$ is the image of $g$ in $\pi_1(G)_I^{\sigma}$. Then the $J_b(\qp)$-invariance of $\pi_{\infty}$ tells us that
\begin{align}
    \pi_{\infty}(g \tilde{x}, g g^{-1}\cdot 1) = \pi_{\infty}(\tilde{x}, g^{-1} \cdot 1) = \kappa(g)^{-1} \pi_{\infty}(\tilde{x}, 1).
\end{align}
Thus the result holds for the connected component of $\iginf$ containing $\tilde{x}$ and therefore for the connected components intersecting the $J_b(\qp)$-orbit of $\tilde{x}$. Since the map $\pi_{\infty}(-,y)$ is $G(\afp)$-equivariant and the $G(\afp)$-action commutes with the $J_b(\qp)$ action on $\iginf$ and with the $\pi_1(G)_I^{\sigma}$ action on $\pi_0(\shginf)$, the result holds for the connected components of the $G(\afp) \times J_b(\qp)$-orbit of $\tilde{x}$. \smallskip 

Assumption \ref{Assump:1} tells us that $H \subset J_b(\qp)$ acts transitively on the fibers of
\begin{align}
    \pi_0(\iginf) \to \pi_0(\shginf).
\end{align}
Now $\pi_1(G)_I^{\sigma} \times G(\afp)$ acts transitively on $\pi_0(\shginf)$ by inspection, see the proof of Lemma \ref{Lem:ConnectedComponents}. We deduce from this that $J_b(\qp) \times G(\afp)$ acts transitively on $\pi_0(\iginf)$, since $H$ is contained in the kernel of $J_b(\qp) \to \pi_1(G)_I^{\sigma}$ by Proposition \ref{Prop:ImageParahoricI}.
\end{proof}
It follows from the proof of Lemma \ref{Lem:JbppActs} that the following result holds.
\begin{Cor} \label{Cor:Transitive}
If Assumption \ref{Assump:1} holds, then the group $G(\afp) \times J_b(\qp)$ acts transitively on $\pi_0(\iginf)$.
\end{Cor}
Let $\Sigma$ be a finite set of primes containing $p$ and all the places $\ell$ such that $\gder_{\ql}$ has a compact factor. Let $\afs$ be the set of finite adeles away from $\Sigma$.
\begin{Lem} \label{Lem:PrimeToPDerivedTrivial}
If Assumption \ref{Assump:1} holds, then the group $\gder(\afs)$ acts trivially on $\pi_0(\iginf)$.
\end{Lem}
\begin{proof}
It follows from Lemma \ref{Lem:ConnectedComponents} and Lemma \ref{Lem:ConnectedComponentSpecialFibre} that $\gder(\afp)$ acts trivially on $\pi_0(\shginf)$. It follows from Assumption \ref{Assump:1} that $\gder(\afp)$ acts trivially on $\pi_0(\cbinf)$ as well, where we recall that we have assumed that $\cb$ is distinguished central leaf. \smallskip

Write $H=\varprojlim H_{n}$ as an inverse limit of finite groups, indexed by the natural numbers. This induces a description of $\iginf \to \cbinf$ as a $G(\afp)$-equivariant inverse limit of finite \'etale covers of $\cbinf$. Concretely, $\iginf=\varprojlim_n \cbinf^n$, where $\cbinf^n$ is the quotient of $\iginf$ by the kernel of $H \to H_{n}$. Since the group $\gder(\afp)$ acts trivially on $\pi_0(\cbinf)$, it has finite orbits when acting on $\pi_0(\cbinf^n)$. In particular, for each $\ell \not \in S$ the group $\gder(\ql)$ acts through a finite quotient on $\pi_0(\cbinf^n)$ for all $n$. 

By the definition of $\Sigma$, the group $\gder_{\ql}$ has no compact factors for $\ell \not \in \Sigma$, which by \cite[Theorem 7.1, Theorem 7.5]{PR} implies that the group $\gder(\ql)$ has no finite index proper subgroups. Thus $\gder(\ql) \subset \gder(\afs)$ acts trivially on $\pi_0(\cbinf^n)$ for all $\ell \not \in \Sigma$, and the result follows by passing to the inverse limit over $n$.
\end{proof}

\subsection{\texorpdfstring{$p$}{p}-adic monodromy} Let $\cbcirc$ be a connected component of $\cb$ and let $z \in \cbcirc(\ovfp)$. Let $\pi_1^{\text{ét}}(\cbcirc,z) \to H$ be the monodromy representation associated to $\ig \to \cb^{\mathrm{perf}}$ and denote its image by $M \subset H \subset J_b(\qp)$. Recall that $J_b' \subset J_b$ is the kernel of the natural map $J_b \to \gab$ as in Section \ref{Sec:NewtonStratification}.

\begin{Thm} \label{Thm:PAdicMonodromyJb}
If Assumption \ref{Assump:1} holds, then the Zariski closure of $M$ in $J_b(\qp)$ is equal to $J_b'$.
\end{Thm}
This result is a consequence of the results of D'Addezio \cites{d2020monodromy, DAddezioII} in combination with Lemma \ref{Lem:MonodromyEll} and Assumption \ref{Assump:1}. To explain this, we first need to introduce some notation. 

\subsubsection{} Recall the following notions from \cite[Sec. 2.2]{d2020monodromy}. Write $F\text{-Isoc}(S)$ for the $\qp$-linear Tannakian category of $F$-isocrystals over a smooth finite type scheme $S$ over $\ovfp$, and write $F\text{-Isoc}^{\dagger}(S)$ for the $\qp$-linear Tannakian category of overconvergent $F$-isocrystals over $S$. There is a natural fully faithful embedding $F\text{-Isoc}^{\dagger}(S) \subset F\text{-Isoc}(S)$, which sends an overconvergent $F$-isocrystal $\mathcal{M}^{\dagger}$ to the underlying $F$-isocrystal $\mathcal{M}$. Similarly, we write $\mathrm{Isoc}^{\dagger}(S)$ and $\mathrm{Isoc}(S)$ for the $\qpbr$-linear category of (overconvergent) isocrystals over $S$. There are natural faithful forgetful functors from (overconvergent) $F$-isocrystals to (overconvergent) isocrystals. 

Given an overconvergent isocrystal $\mathcal{M}^{\dagger}$ over $S$ as above we write $\mathcal{M}$ for its underlying isocrystal. For a point $s \in S(\ovfp)$, there are 
\begin{align}
    \operatorname{Mon}(S, \mathcal{M},s) \subset \operatorname{Mon}(S, \mathcal{M}^{\dagger},s)
\end{align}
that are algebraic groups over $\qpbr$, see the introduction of \cite{DAddezioII}. They are defined to be the Tannakian groups corresponding to the smallest Tannakian subcategory of $\mathrm{Isoc}(S)$ and $\mathrm{Isoc}^{\dagger}(S)$, respectively, containing $\mathcal{M}$, via the fiber functor $\omega_s$
\begin{align}
    \omega_s:\mathrm{Isoc}(S) \to \mathrm{Isoc}(\ovfp)= \operatorname{Vect}_{\qpbr}.
\end{align}
\begin{align} \label{Eq:FCrystalGStructure}
    \operatorname{Rel}_p:\operatorname{Rep}_{\qp} G \to F\text{-Isoc}(\shg)
\end{align}
such that the representation $G_{\qp} \to \mathcal{G}_V \to \operatorname{GL}_V$ coming from the choice of Hodge embedding is sent to the $F$-isocrystal $\mathcal{M}$. Since $\mathcal{M}$ is an overconvergent $F$-isocrystal, it follows that this tensor functor factors through an exact $\qp$-linear tensor functor
\begin{align} \label{Eq:FCrystalGStructureOC}
    \operatorname{Rel}_p:\operatorname{Rep}_{\qp} G \to F\text{-Isoc}^{\dagger}(\shg),
\end{align}
see \cite[Lemma 3.3.2]{OrdinaryHO}. 

Choose an isomorphism $\mathbb{D}_z \simeq \mathbb{D}_x$ sending $s_{\alpha,\mathrm{cris},z}$ to $s_{\alpha,\mathrm{cris},x}$ for all $\alpha \in \mathscr{A}$. If we compose this with our fixed isomorphism $\zpbr \otimes V_p \to \mathbb{D}_x$, we get an isomorphism $\zpbr \otimes V_p \to \mathbb{D}_z$ which sends $1 \otimes s_{\alpha}$ to $s_{\alpha,\mathrm{cris},z}$ for all $\alpha \in \mathscr{A}$. This induces an isomorphism $\omega_z(\mathcal{M}^{\dagger})=\mathbb{D}_z[1/p] \to V \otimes \qpbr$ sending $\omega_z(s_{\alpha})=s_{\alpha,\mathrm{cris},z}$ to $s_{\alpha} \otimes 1$ for all $\alpha \in \mathscr{A}$. This identifies the composite $\omega_z \circ \operatorname{Rel}_p:\operatorname{Rep}_{\qp} G \to \operatorname{Vect}_{\qpbr}$ with the standard fiber functor, tensored up to $\qpbr$. Thus if we apply Tannakian duality to \eqref{Eq:FCrystalGStructure} and \eqref{Eq:FCrystalGStructureOC}, we get inclusions 
\begin{align}
    \operatorname{Mon}(Z, \mathcal{M},z) \subset \operatorname{Mon}(Z, \mathcal{M}^{\dagger},z) \subset G \otimes \qpbr \subset \operatorname{GL}(V \otimes \qpbr).
\end{align}
\begin{Lem} \label{Lem:OCPAdicMonodromy}
If Assumption \ref{Assump:1} holds, then the monodromy group $$\operatorname{Mon}(\cbcirc, \mathcal{M}^{\dag}) \subset G \otimes \qpbr$$ is equal to $\gder \otimes \qpbr$.
\end{Lem}
\begin{proof}
The geometric monodromy group of $R^1 \pi_{\ast} \ql$ over $\cbcirc$ is isomorphic to $\gder_{\ql}$, by Lemma \ref{Lem:MonodromyEll} and Assumption \ref{Assump:1}. Then \cite{d2020monodromy}*{Theorem 1.2.1} tells us that $\mathbb{M}_p:=\operatorname{Mon}(\cbcirc, \mathcal{M}^{\dag})$ is isomorphic to $\gder$ over an algebraic closure of $\qpbr$. This implies that $\mathbb{M}_p$ is equal to its own derived subgroup and therefore it is contained in $\gder \otimes \qpbr$. Since $\mathbb{M}_p$ is connected and of the same dimension as $\gder$, it follows that the inclusion $\mathbb{M}_p \subset \gder \otimes \qpbr$ is an equality.
\end{proof}
Since $\cbcirc$ is contained in a single Newton stratum, the $F$-isocrystal $\mathcal{M}$ admits a unique slope filtration $S_{\bullet}(\mathcal{M})$. It is explained in \cite[Lemma 3.3.4 and the paragraph preceding it]{OrdinaryHO} that this gives rise to a fractional cocharacter $\lambda$ of $G \otimes \qpbr$. Let $b_z \in G(\qpbr) \subset \operatorname{GL}(V \otimes \qpbr)$ be the element corresponding to the Frobenius on $\omega_z(\mathcal{M}^{\dagger})=\mathbb{D}_z[1/p]=V \otimes \qpbr$.
Then by construction of our identification $\mathbb{D}_z[1/p]=V \otimes \qpbr$ we have $b_z=b$. It follows from \cite[Lemma 3.3.4]{OrdinaryHO} that $\lambda=\nu_b$, where $\nu_b$ is the Newton cocharacter attached to $b$, see \cite[Section 1.1.2]{KMPS}. As explained in \cite[Sec. 4.1]{DAddezioII}, we find that the monodromy group
\begin{align}
    \operatorname{Mon}(\cbcirc, \mathcal{M}) \subset G_{\qpbr}
\end{align}
is contained in the parabolic subgroup $P(\lambda) \subset G_{\qpbr}$ associated to $\lambda$.
\begin{Lem} \label{Lem:PAdicMonodromy}
If Assumption \ref{Assump:1} holds, then the monodromy group $$\operatorname{Mon}(\cbcirc, \mathcal{M}) \subset \operatorname{Mon}(\cbcirc, \mathcal{M}^{\dag})=\gder \otimes \qpbr$$ is equal to the intersection of $P(\lambda)$ with $\gder \otimes \qpbr$.
\end{Lem}
\begin{proof}
This is \cite[Theorem 5.1.2]{DAddezioII}.
\end{proof}
We also consider the centraliser $M(\lambda) \subset P(\lambda)$ of $\lambda$. 
\begin{Lem}\label{Lem:SemiSimplePAdicMonodromy}
Let $\mathcal{N}=\operatorname{Gr}S_{\bullet}(\mathcal{M})$ be the associated graded of the slope filtration $S_{\bullet}(\mathcal{M})$ on $\mathcal{M}$. Then the inclusion
\begin{align}
    \operatorname{Mon}(\cbcirc, \mathcal{N}) \subset \operatorname{Mon}(\cbcirc, \mathcal{M})
\end{align}
identifies $\operatorname{Mon}(\cbcirc, \mathcal{N})$ with the intersection of $\gder \otimes \qpbr$ and $M(\lambda)$ in $G \otimes \qpbr$.
\end{Lem}
\begin{proof}
This is \cite[Proposition 5.1.4]{DAddezioII}.
\end{proof}

\subsection{The proof of Theorem \ref{Thm:PAdicMonodromyJb}} \label{Sec:ProofOfMonodromy} We will deduce Theorem \ref{Thm:PAdicMonodromyJb} from the results proved above in combination with Proposition \ref{Prop:FactIsocrystals} below. \smallskip 

Let $S$ be a smooth connected scheme over $\ovfp$ and let $\pi:A \to S$ be an abelian scheme such that the $p$-divisible group $X=A[p^{\infty}]$ is completely slope divisible; let $S_{\bullet}(X)$ be the slope filtration of $X$. Let $\mathcal{M}$ be the isocrystal attached to $A$ and let $S_{\bullet} \mathcal{M}$ be the slope filtration of $\mathcal{M}$. Let $s \in S(\ovfp)$, let $\mathbb{X}=X_s$ and let $\tilde{S} \to S$ be the scheme representing the functor sending an $S$-scheme $T$ to the set $\operatorname{Isom}(\operatorname{Gr} S_{\bullet}(X_T), \mathbb{X}_T)$; it is a pro-\'etale torsor for the profinite group $\operatorname{Aut}(\mathbb{X})$ by \cite[Corollary 1.10]{OortZink}. The rational Dieudonn\'e module functor gives a natural continuous homomorphism
\begin{align}
    \operatorname{Aut}(\mathbb{X}) \to \operatorname{Aut}_{\qpbr}(\mathcal{M}_s).
\end{align}
\begin{Prop} \label{Prop:FactIsocrystals}

Let $\rho:\pi_{1}^{\text{\'et}}(S,s) \to \operatorname{Aut}(\mathbb{X})$ be the monodromy representation associated to $\tilde{S} \to S$. Then the Zariski closure of the image of $\rho$ inside $\operatorname{Aut}_{\qpbr}(\mathcal{M}_s)$ is equal to the monodromy group
\begin{align}
    \mon(S,\mathcal{N},s) \subset \operatorname{GL}_{\qpbr}(\mathcal{M}_s),
\end{align}
where $\mathcal{N} = \operatorname{Gr}S_{\bullet}(\mathcal{M})$.
\end{Prop}
\begin{proof}
As explained in the proof of \cite[Theorem 5.16]{DAddezioII}, the $F$-isocrystal $\mathcal{M}$ is the rational crystalline Dieudonn\'e crystal $\mathbb{D}(X)[1/p]$ of $X$ and the $F$-isocrystal $\mathcal{N}$ is therefore the crystalline Dieudonn\'e crystal of $\operatorname{Gr} S_{\bullet}(X)$. Thus the tautological isomorphism
\begin{align}
    \operatorname{Gr} S_{\bullet}(X_{\tilde{S}}) \to \mathbb{X}_{\tilde{S}}
\end{align}
induces an isomorphism
\begin{align} \label{Eq:TrivialisationCanonical}
    \mathcal{N}_{\tilde{S}} \to \mathcal{M}_{s, \tilde{S}}.
\end{align}
Let $Z$ be a connected component of $\tilde{S}$. Then the stabiliser in $\operatorname{Aut}(\mathbb{X})$ of $Z$ is equal to $M$, the image of the monodromy representation. Moreover, $Z \to S$ is a pro-\'etale $M$-torsor. 

Now let $\eta_S=\spec k(\eta_S)$ be the generic point of $S$ and let $\eta_Z=\spec (k(\eta_Z))$ be the generic point of $Z$. Then $\eta_Z \to \eta_S$ is a pro-\'etale $M$-torsor, in other words, the field $k_{\eta_Z}$ is a Galois extension of $k(\eta_S)$ with Galois group topologically isomorphic to $M$. Let $\mathcal{N}_{\eta_S}$ be the pullback of the isocrystal $\mathcal{N}$ to $\eta_S$, and let $\langle \mathcal{N}_{\eta_S} \rangle$ be the Tannakian category generated by $\mathcal{N}_{\eta_S}$ inside the Tannakian category of isocrystals on $\eta_S$. (The field $k(\eta_S)$ has a finite $p$-basis, and so the category of isocrystals on it is Tannakian by \cite[Corollary 3.3.3]{DrinfeldCrystals}.) By \cite[Theorem 3.2.5]{DAddeziovanHoften} and its proof, pullback to $\eta_S$ induces an equivalence of Tannakian categories
\begin{align}
    \langle \mathcal{N} \rangle \simeq \langle \mathcal{N}_{\eta_S} \rangle.
\end{align}
We can restrict \eqref{Eq:TrivialisationCanonical} to $\eta_Z$ to deduce that the isocrystal $\mathcal{N}_{\eta_S}$ becomes trivial after pullback to $\eta_Z$. Since $a:Z \to S$ is a pro-\'etale $M$-torsor, it satisfies descent for isocrystals by \cite[Proposition 3.5.4]{DrinfeldCrystals}, see also \cite[main result]{MatthewDescent} or \cite[Section 2]{BhattScholzePrismaticCrystals}. By descent for isocrystals, any object $P$ in $\langle \mathcal{N}_{\eta_S} \rangle$ can be described by its pullback $a^{\ast} P$, which is a $\qpbr$-vector space, together with its continuous and $\qpbr$-linear action of $M$. We identify $\mathcal{N}_{\eta_S}$ itself with the vector space $\omega_s(\mathcal{N})=\omega_s(\mathcal{M})=:\mathcal{M}_s$.

This identifies $\langle \mathcal{N}_{\eta_S} \rangle$ with a full subcategory of the category of continuous representations of $M$ on $\qpbr$-vector spaces. Namely, the one generated (as a Tannakian category) by the representation $M \to \operatorname{Aut}(\mathbb{X}) \to \operatorname{GL}_{\qpbr}(\mathcal{M}_s)$. \smallskip 

This category is also equivalent to the full subcategory $\langle \mathcal{M}_s \rangle$ of the category of (not necessarily continuous) representations of $M$ on finite-dimensional $\qpbr$-vector spaces, generated (as a Tannakian category) by the representation $M \to \operatorname{Aut}(\mathbb{X}) \to \operatorname{GL}_{\qpbr}(\mathcal{M}_s)$. Indeed, the continuity of the action of $M$ is automatic for objects in $\langle \mathcal{M}_s \rangle$. By \cite[Proposition 6.5.15]{FundamentalGroups}, this implies that the Tannakian group of $\langle \mathcal{M}_s \rangle$ is equal to the Zariski closure of $M$ in $\operatorname{GL}_{\qpbr}(\mathcal{M}_s)$. But this Tannakian group is equal to $\operatorname{Mon}(S, \mathcal{N},s)$ per definition.
\end{proof}

\begin{proof}[Proof of Theorem \ref{Thm:PAdicMonodromyJb}]
Recall from \cite[Section 1.1.4]{KMPS} that there is a natural map $J_b \otimes \qpbr \to G \otimes \qpbr$ whose image can be identified with the centraliser $M_{\nu_b}$ of $\nu_b$. As explained in the paragraph after Lemma \ref{Lem:OCPAdicMonodromy}, the Newton cocharacter $\nu_{b}$ is equal to $\lambda$. Thus, by Lemma \ref{Lem:SemiSimplePAdicMonodromy}, the monodromy group $\operatorname{Mon}(\cbcirc, \mathcal{N})$ is the intersection of $M_{\nu_b} \otimes \qpbr$ and $\gder \otimes \qpbr$.

\textbf{Step 1: The Mantovan Igusa variety.} Let $\cbgsp$ be a central leaf in $\shgsp$ containing $\cb$ and let $X=A[p^{\infty}]$ be the $p$-divisible group of the universal abelian variety over $\cbgsp$. Then because we have chosen $\cb$ to be distinguished, it follows that the $p$-divisible group $\mathbb{X}:=A_x[p^{\infty}]$, where $x$ is our fixed basepoint, is completely slope divisible in the sense of \cite[Definition 2.4.2]{KimLeaves}, see \cite[Lemma 2.4.3, paragraph after Definition 2.4.2]{KimLeaves}. As explained in \cite[Section 3.2.3]{MantovanPEL}, this implies that $X$ is completely slope divisible over $\cbgsp$. \smallskip

Let $S_{\bullet}(X)$ be the slope filtration of $X$, and let $\operatorname{Gr} S_{\bullet}(X)$ be the associated graded. Let $\lambda$ (not to be confused with the slope cocharacter $\lambda$ introduced above) be the polarisation of $\mathbb{X}$ induced by the polarisation on $A_x$ and let $\operatorname{Aut}(\mathbb{X}, \lambda)$ be the profinite group of automorphisms of $\mathbb{X}$ that preserve $\lambda$ up to a scalar in $\zp^{\times}$. Then it follows from \cite[main result]{MantovanPEL}, see \cite[discussion after Definition 4.3.6]{CaraianiScholze}, that there is a pro-\'etale $\operatorname{Aut}(\mathbb{X}, \lambda)$-torsor
\begin{align}
    \pi:\igmgsp \to \cbgsp
\end{align}
parametrising isomorphisms $\operatorname{Gr} S_{\bullet}(X_{\cb}) \simeq \mathbb{X}_{\cbgsp}$ compatible with the polarisations up to a scalar in $\zp^{\times}$. By \cite[Proposition 4.3.8]{CaraianiScholze}, the perfection of $\pi$ can be identified with
\begin{align}
    \iggsp \to \cbgsp^{\mathrm{perf}}. 
\end{align}
There is moreover a commutative diagram
\begin{equation}
    \begin{tikzcd}
    \igm \arrow{d}{a} \arrow{r} & \igmgsp \arrow{d} \\
    \cb \arrow{r} & \cbgsp,
    \end{tikzcd}
\end{equation}
where $\igm$ is the pro-\'etale $H$-torsor associated to $\ig \to \cb^{\mathrm{perf}}$, under the equivalence of \'etale sites between $\cb$ and $\cb^{\mathrm{perf}}$. Note that in the Siegel case, the Mantovan Igusa variety depends on $\llbracket b \rrbracket$ rather than just on $[b]$. Therefore we have decided to include $\llbracket b \rrbracket$ rather than $[b]$ in the notation for the Mantovan Igusa varieties. Our fixed isomorphism $\zpbr \otimes V_p \to \mathbb{D}_x$ sending $1 \otimes s_{\alpha}$ to $s_{\alpha,\mathrm{cris},x}$ for all $\alpha \in \mathscr{A}$ induces a natural embedding
\begin{align}
    J_b(\qp) \to \operatorname{QIsog}_{\lambda}(\mathbb{X}),
\end{align}
which maps $H \subset J_b(\qp)$ to $\operatorname{Aut}(\mathbb{X}, \lambda)$. The morphism $\igm \to \igmgsp$ is $H$-equivariant for the $H$-action on the target via $H \to \operatorname{Aut}(\mathbb{X}, \lambda)$. In particular, $\igm \to \cb$ naturally maps $H$-equivariantly to the pro-\'etale $\operatorname{Aut}(\mathbb{X})$-torsor over $\cb$ that was introduced in the beginning of Section \ref{Sec:ProofOfMonodromy}.

\textbf{Step 2: Applying Proposition \ref{Prop:FactIsocrystals}} It now follows from Proposition \ref{Prop:FactIsocrystals} that the Zariski closure of the image of the monodromy representation $\rho:\pi_1^{\text{\'et}}(\cbcirc, x) \to H \to \operatorname{Aut}(\mathbb{X}, \lambda) \to \operatorname{GL}_V(\qpbr)$ is equal to the monodromy group $\mon(\cbcirc, \mathcal{N},x)$. This monodromy group is equal to $\left(M_{\nu_b} \cap \gder \right) \otimes \qpbr$ by Lemma \ref{Lem:SemiSimplePAdicMonodromy}. Thus the Zariski closure of $M$ in $G(\qpbr)$ is equal to $\left(M_{\nu_b} \cap \gder \right) \otimes \qpbr = J'_{b} \otimes \qpbr$. \smallskip 

We conclude that the image of $M \to J_b(\qp) \to J_b(\qpbr)$ is contained in $J_b'(\qpbr)$ and thus in $J_b'(\qp)$. Moreover, this image is Zariski dense in $J_b'(\qpbr)$ and therefore so in $J_b'(\qp)$, since the formation of Zariski closures commutes with flat base change.
\end{proof}
\subsection{Consequences of Theorem \ref{Thm:PAdicMonodromyJb}} In this section we will deduce some consequences of Theorem \ref{Thm:PAdicMonodromyJb} that are relevant to us.
\begin{Cor} \label{Cor:PAdicMonodromy}
If Assumption \ref{Assump:1} holds, then the group $M$ contains a compact open subgroup of $\jbder(\qp)$ and is contained in $J_b'(\qp)$.
\end{Cor}
\begin{proof}
The group $M$ is a $p$-adic Lie group by \cite[Prop. 2.3]{LecturesLieGroups} and the morphism $M \to H \to J_b(\qp)$ is a morphism of $p$-adic Lie groups by \cite[Prop. 2.2]{LecturesLieGroups}. This implies that there is a $\qp$-Lie algebra $\operatorname{Lie} M$ and a morphism of Lie algebras $\operatorname{Lie} M \to \operatorname{Lie} J_b(\qp) = \operatorname{Lie} J_b$. By Theorem \ref{Thm:PAdicMonodromyJb}, the group $M$ has Zariski closure equal to $J_b'$ and is thus contained in $J_b'(\qp)$.

This means that $\operatorname{Lie} J_b'$ is the smallest Lie algebra of an algebraic subgroup of $J_b$ containing $\operatorname{Lie} M$. In the notation of \cite[Section 7.1]{Borel}, this is expressed as $\mathfrak{a}(\operatorname{Lie} M)=\operatorname{Lie} J_b'$. By \cite[Corollary 7.9]{Borel} we have the following equality of Lie subalgebras of $\operatorname{Lie} J_b$: 
\begin{align}
    [\operatorname{Lie} M,\operatorname{Lie} M] &= [\mathfrak{a}(\operatorname{Lie} M),\mathfrak{a}(\operatorname{Lie} M)] \\
    &=[\operatorname{Lie} J_b',\operatorname{Lie} J_b']\\
    &=\operatorname{Lie} \jbder.
\end{align} 
In particular, we see that $\operatorname{Lie} \jbder \subset \operatorname{Lie} M$. By the theory of $p$-adic Lie groups and their exponential maps, see \cite[Section 2]{LecturesLieGroups}, this implies that $M$ contains a compact open subgroup of $\jbder(\qp)$.
\end{proof}
Let $Z$ be the center of the algebraic group $J_b$.
\begin{Lem} \label{Lem:CenterFinitelyManyOrbits}
If Assumption \ref{Assump:1} holds, then the quotient $Z(\qp) \backslash \pi_0(\ig)$ is finite.
\end{Lem}
\begin{proof}
The quotient $H \backslash \pi_0(\ig)$ can be identified with $\pi_0(\cb)$ and is therefore finite. Moreover, the stabiliser of a connected component of $\pi_0(\ig)$ can be identified with $M \subset H$. Since $M$ contains a compact open subgroup of $\jbder(\qp)$ by Corollary \ref{Cor:PAdicMonodromy}, we see that $M \cap \jbder(\qp)$ is of finite index in $H^{\mathrm{der}}:=H \cap \jbder(\qp)$. Thus the map $\pi_0(\ig) \to H^{\mathrm{der}} \backslash \pi_0(\ig)$ is finite-to-one, and $H$ acts with finitely many orbits on the target and its action moreover factors through $H/H^{\mathrm{der}}$.

The quotient $H/H^{\mathrm{der}}$ can be identified with a compact open subgroup of $J_b^{\mathrm{ab}}(\qp)$. In particular, a compact open subgroup of $Z(\qp)$ will act on it with finitely many orbits, and the lemma follows.
\end{proof}
\begin{Cor} \label{Cor:FiniteOrbits}
If Assumption \ref{Assump:1} holds, then the group $\jbder(\qp)$ acts with finite orbits on $\pi_0(\ig)$.
\end{Cor}
\begin{proof}
Let $a \in \pi_0(\ig)$ and write the $J_b(\qp)$-orbit of $a$ as $J_b(\qp)/P_a$, where $P_a \subset J_b(\qp)$ is the stabiliser of $a$. We want to show that the group $P_a^{\mathrm{der}}:=P_a \cap \jbder(\qp)$ has finite index in $\jbder(\qp)$. Equivalently, by the fact that $\jbder(\qp) \cdot Z(\qp)$ has finite index in $J_b(\qp)$ and the fact that $Z(\qp) \cap \jbder(\qp)$ is finite, we need to show that $P_a^{\mathrm{der}} \cdot Z(\qp)$ has finite index in $J_b(\qp)$. \smallskip 

Lemma \ref{Lem:CenterFinitelyManyOrbits} implies that $P_a \cdot Z(\qp)$ has finite index in $J_b(\qp)$, and so it suffices to show that 
\begin{align}
    P_a^{\mathrm{der}} \cdot Z(\qp) \subset P_a \cdot Z(\qp)
\end{align}
has finite index. This is true because the cokernel of $P_a^{\mathrm{der}} \to P_a$ is naturally a subgroup of $J_b^{\mathrm{ab}}(\qp)$, and because $Z(\qp) \to J_b^{\mathrm{ab}}(\qp)$ has finite cokernel. 
\end{proof}
The group $\jbder$ is simply connected because $J_b$ is an inner form of a Levi subgroup of $G$, and $\gder$ is simply connected (see \cite{MalleTesterman}*{Proposition 12.14}). Therefore we can write $\jbder$ as a product of restrictions of scalars of semi-simple and simply connected groups whose adjoint groups are absolutely simple. In particular, we can write $\jbder=\jbderan\times \jbderiso$ with the first factor anisotropic and the second factor totally isotropic. In other words, the group $\jbderiso$ has no compact factors.
\begin{Prop} If Assumption \ref{Assump:1} holds, then $\jbderiso(\qp)$ acts trivially on $\pi_0(\ig)$. \label{Prop:Geometricmonodromy}
\end{Prop}

\begin{proof}
It follows from Corollary \ref{Cor:FiniteOrbits} that $\jbder(\qp)$ acts with finite orbits on $\pi_0(\ig)$. Therefore the subgroup $\jbderiso(\qp)$ acts with finite orbits. However, since $\jbderiso$ is totally isotropic, it follows that $\jbderiso(\qp)$ has no finite index proper subgroups, see \cite[Theorem 7.1, Theorem 7.5]{PR}. We conclude that the action of $\jbderiso(\qp)$ on $\pi_0(\ig)$ is trivial. \end{proof}
This argument will \emph{not} work for the anisotropic part of $\jbder$, because it is not true that $\jbderan(\qp)$ has no non-trivial finite quotients. This is why we have to assume that $\jbder=\jbderiso$ in our main theorems.

\section{Constructing a maximal torus} \label{Section4} In this section, we will show that there are (many) maximal tori of $J_b'$ whose $\qp$-points stabilise given connected components of $\ig$.
\subsection{Prime-to-\texorpdfstring{$p$}{p} Hecke operators}
Let the notation be as in Section \ref{Sec:Notation} and let $\tilde{z} \in \iginf(\ovfp)$ be a point with image $z \in \cbinf(\ovfp)$. By the construction of the Igusa variety $\ig \to \cb$ in Section \ref{Sec:Igusa}, we know that the point $\tilde{z}$ corresponds to an isomorphism $A_z[p^{\infty}] \to \mathbb{X}=A_x[p^{\infty}]$, which induces an isomorphism $\mathbb{D}_z \simeq \mathbb{D}_x$ sending $s_{\alpha,\mathrm{cris},z}$ to $s_{\alpha,\mathrm{cris},x}$ for all $\alpha \in \mathscr{A}$. If we compose this with our fixed isomorphism $\zpbr \otimes V_p \to \mathbb{D}_x$, which sends $1 \otimes s_{\alpha}$ to $s_{\alpha,\mathrm{cris},x}$ for all $\alpha \in \mathscr{A}$, then we get an induced embedding $j_{\tilde{z},p}:I_{z,\qp} \to J_b$. Moreover, the image of $\tilde{z}$ in $\shginf(\ovfp)$ gives us an embedding
\begin{align}
    j_{\tilde{z}}^p:I_{z,\afp} \to G_{\afp}.
\end{align}
We note that both $G(\afp)$ and $J_b(\qp)$ act on $\iginf$.
\begin{Lem} \label{Lem:TwoActions}
The subgroup $(j_{\tilde{z}}^p, j_{\tilde{z},p})(I_{z}(\mathbb{Q})) \subset G(\afp) \times J_b(\qp)$, stabilises the point $\tilde{z} \in \iginf(\ovfp)$.
\end{Lem}
\begin{proof}
It suffices to show this in the Siegel case, where it is a direct consequence of the moduli interpretation of the Caraiani--Scholze Igusa variety with infinite prime-to-$p$ level
\begin{align}
    \varprojlim_{\mathcal{K}^p \subset \mathcal{G}_V(\afp)} \iggsp,
\end{align}
coming from \cite[Lemma 4.3.4]{CaraianiScholze}. 
\end{proof}
By \cite[Lemma 2.2.8]{KMPS}, there is a homomorphism $I_z \to \gab$ such that the induced morphism $I_{z, \afp} \to \gab_{\afp}$ agrees with the composition of $j_{\tilde{z}}^p$ with $G_{\afp} \to \gab_{\afp}$, and such that the induced morphism $I_{z,\qp} \to \gab_{\qp}$ agrees with the composition of $j_{\tilde{z},p}$ with $J_{b} \to \gab_{\qp}$. We define $I_{z}' \subset I_z$ to be the kernel of this homomorphism.
\begin{Cor} \label{Cor:PrimeToPHecke}
If Assumption \ref{Assump:1} holds, then the group $j_{\tilde{z},p}\left(I_{z}'(\mathbb{Q})\right) \subset J_b(\qp)$ acts trivially on the image of $\tilde{z}$ in $\pi_0(\iginf)$.
\end{Cor}
\begin{proof}
By Lemma \ref{Lem:TwoActions}, the subgroup
\begin{align}
    (j_{\tilde{z}}^p, j_{\tilde{z},p})(I_{z}(\mathbb{Q})) \subset G(\afp) \times J_b(\qp)
\end{align}
stabilises $\tilde{z}$. The group $\gder(\afs)$ acts trivially on $\pi_0(\iginf)$ by Lemma \ref{Lem:PrimeToPDerivedTrivial} and thus stabilises the image of $\tilde{z}$ in $\pi_0(\iginf)$. Now $I_z(\mathbb{Q}) \cap \gder(\afs) = I_z'(\mathbb{Q})$ by the discussion in the paragraph before the statement of Corollary \ref{Cor:PrimeToPHecke}. If we combine this with Lemma \ref{Lem:TwoActions}, we see that $j_{\tilde{z},p}(I_{z}'(\mathbb{Q}))\subset J_b(\qp)$ also stabilises the image of $\tilde{z}$ in $\pi_0(\iginf)$.
\end{proof}
\begin{Cor} \label{Cor:ClosureActsTrivially}
The closure of $j_{\tilde{z},p}\left(I_{z}'(\mathbb{Q})\right) \subset J_b(\qp)$ acts trivially on the image of $\tilde{z}$ in $\pi_0(\iginf)$ and the image of $\tilde{z}$ in $\pi_0(\ig)$.
\end{Cor}
\begin{proof}
Recall that for a quasicompact and quasiseparated scheme $X$ the topological space $\pi_0(X)$ of connected components of $X$ is a profinite set, see \cite[Lemma 0906]{stacks-project}. This applies in particular to $\ig$ and $\iginf$, and thus $\pi_0(\ig)$ and $\pi_0(\iginf)$ are compact Hausdorff topological spaces. \smallskip

Since the action of $J_b(\qp)$ on $\pi_0(\iginf)$ is continuous by Lemma \ref{Lem:Continuity}, it follows that the stabiliser of the image of $\tilde{z}$ in $\pi_0(\iginf)$ under the action of $J_b(\qp)$ is a closed subgroup. Since the stabiliser contains $I_{z}'(\mathbb{Q}) \subset J_b(\qp)$ by Corollary \ref{Cor:PrimeToPHecke}, it thus contains its closure. The result for $\pi_0(\ig)$ follows from the $J_b(\qp)$-equivariance of $\pi_0(\iginf) \to \pi_0(\ig)$.
\end{proof}

\subsection{Hypersymmetric points}
We start with the following definition (compare with \cite[Def. 6.4]{chai2006hypersymmetric}).
\begin{Def}
We call $z \in \shg(\ovfp)$ \emph{weakly hypersymmetric} if $I_{z, \qp} \simeq J_b$. If in addition $I_z(\mathbb{Q})$ is dense in $I_{z}(\qp)$, then we call $z$ \emph{hypersymmetric}.
\end{Def}
If the Newton stratum $\shgb$ contains a hypersymmetric point, then the arguments above can be used to show that $J_b'(\qp)$ acts trivially on $\operatorname{Ig}_b$, without using the results of Section \ref{Sec:Monodromy}. Unfortunately, although Newton strata on Siegel modular varieties always contain hypersymmetric points, see \cite{chai2006hypersymmetric}, they are sparse in more general settings. For example, in the PEL case, not every Newton stratum contains a hypersymmetric point, see \cite{zong2008hypersymmetric}; they might not exist even in the $\mu$-ordinary stratum, see \cite{LXXiao}*{Corollary 7.5.}. See also \cite[Theorem 1.0.1]{Fung} for a precise criterion for the existence of hypersymmetric points in the Hodge type case, and \cite[Chapter 5, 5.1.1]{Fung} for computations and counterexamples to the existence of hypersymmetric points.
\subsection{Honda--Tate theory} \label{Sec:HondaTate}
Recall that a special point datum $(T,h,i)$ for $(G,X)$ consists of a Shimura datum $(T,h)$, where $T$ is a maximal torus of $G$, and an embedding of Shimura data $i:(T,h) \to (G,X)$. The $\qpbar$-points of $\mathbf{Sh}_{K^p}(G,X)$ that lie in the Shimura variety for $(T,h)$ are called special points; they all have good reduction and so give rise to $\ovfp$-points $z$ of $\shginf$ called special points. These points $z$ come equipped with morphisms $j:T \to I_{z}$ such that $T_{\ql} \to I_{z,\ql} \to G_{\ql}$ is conjugate to $i_{\ql}$ for all $\ell \not=p$. (When $K_p$ is hyperspecial, this is explained in the proof of \cite[Proposition 5.7.6.(ii)]{KisinShinZhu}, and the same proof works in general.) In particular, the natural morphism $T \to I_{z} \to \gab$ is equal to $T \to G \to \gab$. The main theorem of \cites{KMPS} shows that every Newton stratum in $\shg$ contains many special  points. For a maximal torus $\mathscr{T} \subset I_z$, we will write $\mathscr{T}'$ for $\mathscr{T} \cap I_z'$.
\begin{Prop} \label{Prop:HondaTate}
If Assumption \ref{Assump:1} holds, then for each maximal torus $T \subset J_b$ and every connected component $W$ of $\cbinf$, there is a point $z$ in $W$ such that $I_{z}$ contains a torus $\mathscr{T}$, with $\mathscr{T}'_{\qp}$ isomorphic to $T \cap J_b'$. Furthermore, we can choose $\mathscr{T}$ such that $\mathscr{T}'$ satisfies weak approximation, that is, such that $\mathscr{T}'(\mathbb{Q})$ is dense in $\mathscr{T}'(\qp)$. 
\end{Prop}
\begin{proof}
Since $G_{\qp}$ is quasi-split, the group $M_b \subset G_{\qp}$ is quasi-split, and thus we can transfer $T$ from $J_b$ to $M_b$ and hence consider it as a maximal torus of $G_{\qp}$. This transfer can be done such that $T \cap \gder_{\qp}$ is isomorphic to $T \cap J_b'$. Indeed, the same argument applies to show that $T \cap J_b'$ transfers to $M_b \cap \gder$, and $T \cap J_b'$ extends uniquely to $J_b$ and its transfer to $M_b \cap \gder$ extends uniquely to $M_b \cap G$. Then \cite{KMPS}*{Proposition 1.2.5} tells us that we can find a maximal torus $i:\mathscr{T} \to G$ such that: There exists $h \in X$ that factors through $i(\mathscr{T}_{\mathbb{R}})$ making $(\mathscr{T},h,i)$ into a special point datum, and this special point datum induces an isogeny class $\mathscr{I} \subset \shgbinf(\ovfp)$ with automorphism group $I$ containing $\mathscr{T}$, such that $\mathscr{T}_{\qp}$ is $G(\qp)$-conjugate to $T$ in $G_{\qp}$. In particular, $\mathscr{T}_{\qp} \cap \gder_{\qp}$ is isomorphic to $T \cap \gder_{\qp}$. Since $\mathscr{T} \to I_z \to \gab$ is equal to $\mathscr{T} \to G \to \gab$, this implies that $\mathscr{T}'_{\qp}$ is isomorphic to $T \cap \gder_{\qp}$.

The construction of $\mathscr{T}$ in the proof of \cite{KMPS}*{Proposition 1.2.5} is quite flexible. They start by choosing a maximal torus $T_{\infty} \subset G_{\mathbb{R}}$ such that there is an $h \in X$ factoring through $\mathscr{T}_{\infty}$ and then they choose \emph{any} maximal torus $\mathscr{T}_0 \subset G$ that is $G(\mathbb{R})$-conjugate to $T_{\infty}$ and $G(\qp)$-conjugate to $T$. Next, they choose $g \in \gder(\overline{\mathbb{Q}})$ so that the cocycle $\sigma \mapsto g \sigma(g)^{-1}$ lies in $\mathscr{T}_{0}'(\overline{\mathbb{Q}})$, where $\mathscr{T}'_0=\mathscr{T}_0 \cap \gder$. By \cite[Lemma 1.2.1]{KMPS}, this cocycle can be chosen such that its cohomology class is trivial in $H^1(\qp, \mathscr{T}_{0}')$, and in fact the proof of the lemma shows that we can choose this cocycle to be trivial at any finite set of places $S$ of $\mathbb{Q}$ disjoint from $\{\infty\}$. Then $\mathscr{T}$ arises as $\mathrm{int}(g^{-1})(\mathscr{T}_{0,\overline{\mathbb{Q}}})$, which is defined over $\mathbb{Q}$. \medskip

By weak approximation for the variety of maximal tori of $G$, we can choose $\mathscr{T}_0$ as above with fixed $\gder(\ql)$-conjugacy class for any finite set $S$ of primes $\ell \not=p$. By the discussion above, we can choose $g \in \gder(\overline{\mathbb{Q}})$ such that $\mathscr{T}_{0}'$ has the same specified $\gder(\ql)$-conjugacy class for $\ell \in S$. It now follows from \cite{PrasadRapinchuk}*{Theorem 1.(i)} and the result of Klyachko mentioned afterwards (see \cite[part (2) of the Theorem at the end of Section 8.3]{VoskresenskiII}), that this gives us enough flexibility to choose $\mathscr{T} \cap \gder$ to have weak approximation. \smallskip

The proposition now follows from the fact that the isogeny class $\mathscr{I}$ surjects onto $\pi_0(\shginf)$ and moreover intersects every central leaf in the Newton stratum $\shgb$; this follows from Rapoport--Zink uniformisation of isogeny classes (which is \cite[Proposition 6.5]{Zhou}). Indeed, if we let $b$ correspond to $\tilde{z} \in \iginf(\ovfp)$ above $z \in \mathscr{I}$ then the isogeny class $\mathscr{I}$ receives the Rapoport--Zink uniformisation map $\Theta_{\tilde{z}}: G(\afp) \times \xmub(\ovfp) \to \mathscr{I} \subset \shgbinf(\ovfp)$, which is compatible with the product formula map $\pi_{\infty}:\iginf \times \xmub \to \shgbinf$. Therefore the image of $\Theta_{\tilde{z}}$ intersects every central leaf.

To prove that $\Theta_{\tilde{z}}$ surjects onto $\pi_0(\shg)$, we argue as follows: The map $G(\afp) \times \xmub(\ovfp) \to \pi_0(\shginf)$ induced by $\Theta_{\tilde{z}}$ is $G(\afp) \times J_b(\qp)$-equivariant, where $J_b(\qp)$ acts on $\pi_0(\shginf)$ via the surjection $J_b(\qp) \to \pi_1(G)_I^{\sigma}$; this was explained in the proof of Lemma \ref{Lem:JbppActs}. We may then argue as in the proof of Lemma \ref{Lem:JbppActs} to show that $G(\afp) \times \xmub(\ovfp) \to \pi_0(\shginf)$ is surjective.
\end{proof}
\begin{Cor} \label{Cor:MaxTorus}
If Assumption \ref{Assump:1} holds, then for every maximal torus $T \subset J_b$ and every connected component $Z$ of $\iginf$, we can find a maximal torus $T'' \subset J_b'$ such that $T''$ is isomorphic to $T \cap J_b'$ as algebraic groups and such that $T''(\qp)$ stabilises $Z$.
\end{Cor}
\begin{proof}
Let $W$ be the image of $Z$ under $\iginf \to \cbinf$. Then by Proposition \ref{Prop:HondaTate}, we can find a point $z \in W$ such that $I_z$ contains a torus $\mathscr{T}$ with $\mathscr{T}'_{\qp}$ isomorphic to $T \cap J_b'$ and such that $\mathscr{T}'(\mathbb{Q})$ is dense in $\mathscr{T}'(\qp)$. Now let $\tilde{z} \in Z$ be a point lying above $z$, then $\tilde{z}$ induces $j_{\tilde{z},p}:I_{z,\qp} \to J_b$ which sends $I_z'$ to $J_b'$. Thus $T'':=\mathscr{T}'_{\qp}$ is a maximal torus of $J_b'$ which is isomorphic to $T \cap J_b'$ as algebraic groups. Corollary \ref{Cor:ClosureActsTrivially} then tells us that $T''(\qp)$ acts trivially on $Z$.
\end{proof}
\section{Group theory} \label{Sec:GroupTheory}
Let $\mbg$ be a connected reductive group over $\qp$ and assume that $\mbgder$ is simply connected. The goal of this section is to prove Proposition \ref{Prop:GroupTheory} below, which we will apply to $\mbg=J_b'$ in Section \ref{Sec:MainThm}. 

Let $\mbm \subset \mbg$ be the centraliser of a maximal split torus of $\mbg$, and let $\mbmder$ be its derived subgroup. Then $\mbmder$ is simply connected because $\mbgder$ is, see \cite{MalleTesterman}*{Proposition 12.14}, and moreover $\mbmder$ is anisotropic by construction. Therefore, it follows from \cite{PR}*{Theorem 6.5} that there are finite extensions $K_1, \cdots, K_m$ of $\qp$ and central division algebras $D_i$ over $K_i$ such that
\begin{align}
    \mbmder \simeq \prod_{i=1}^m \operatorname{Res}_{K_i/\mathbb{Q}_p} D_i^1.
\end{align}
Here $D_i^1$ is the algebraic group over $K_i$ which is the kernel of the reduced norm map $\operatorname{Nm}_{D_i/K_i}:D_i^{\times} \to \mathbb{G}_{m,K_i}$. We will write $\breve{K}_i$ for the completion of a maximal unramified extension of $K_i$. We will write $\mathcal{P}([m])$ for the set of all subsets $Y \subset \{1,\cdots,m\}=:[m].$
\begin{Prop} \label{Prop:GroupTheory}
For each $Y \in \mathcal{P}([m])$ let $T_Y$ be a maximal torus of $\mbg$ such that: The torus $T_Y$ contains a maximal $\qp$-split torus, has maximal $\breve{K}_i$-split rank among maximal tori containing a maximal $\qp$-split torus for $i \in Y$, and has minimal $\breve{K}_i$-split rank among maximal tori containing a maximal $\qp$-split torus for $i \not \in Y$. Then the group $\mbg(\qp)$ is topologically generated by $\mbgder(\qp)$ and $\bigcup_{Y \in \mathcal{P}([m])} T_Y(\qp)$.
\end{Prop}
Maximal tori as in the statement of Proposition \ref{Prop:GroupTheory} exist, this will be clear from its proof. 
\subsection{Semisimple anisotropic groups}\label{Sec:Anisotropic}
Choose a finite extension $K$ of $\qp$ and a central division algebra $D$ over $K$ of rank $n^2$. Maximal tori of $D^{\times}$ correspond to subfields $F$ of degree $n$ of $D$, and each degree $n$ field extension of $K$ is such a subfield (see \cite{milneCFT}*{Remark IV.4.4.(c)}). For a subfield $F$ of degree $n$ we will write $R^1_{F/K} \mathbb{G}_m \subset \operatorname{Res}_{F/K} \mathbb{G}_m$ for the kernel of the norm map $\operatorname{Nm}_{F/K}:\operatorname{Res}_{F/K} \to \mathbb{G}_{m,K}$, which gives rise to a maximal torus of $D^1$.
\begin{Lem}\label{Lem:Cohomology}
Let $L \subset D$ be a degree $n$ subfield that is unramified over $K$ and let $F \subset D$ be a subfield of degree $n$ that is totally ramified over $K$. Let $Z$ be the center of $D^1$. Then any cohomology class $\alpha \in H^1(K, Z)$ maps to zero in either $H^1(K, R^1_{L/K} \mathbb{G}_m)$ or $H^1(K, R^1_{F/K} \mathbb{G}_m)$.
\end{Lem}
\begin{proof}
We know that $Z \simeq \mu_{n,K}$, and therefore $H^1(K, Z)=K^{\times}/K^{\times,n}$. A standard long exact sequence argument shows that $H^1(K, R^1_{L/K} \mathbb{G}_m)=K^{\times}/\operatorname{Nm}_{L/K} L^{\times}$ and that $H^1(K, R^1_{F/K} \mathbb{G}_m)=K^{\times}/\operatorname{Nm}_{F/K} F^{\times}$. Moreover, under these identifications the natural maps $H^1(K,Z) \to H^1(K, R^1_{L/K} \mathbb{G}_m)$ and $H^1(K,Z) \to H^1(K, R^1_{F/K} \mathbb{G}_m)$ correspond to the natural maps 
\begin{align}
    K^{\times}/K^{\times,n} \to K^{\times}/\operatorname{Nm}_{L/K} L^{\times}, K^{\times}/\operatorname{Nm}_{F/K} F^{\times}
\end{align}
corresponding to the inclusions 
\begin{align}
    K^{\times,n} \subset \operatorname{Nm}_{L/K} L^{\times}, \operatorname{Nm}_{F/K} F^{\times}.
\end{align}
The result follows because the group generated by $\operatorname{Nm}_{L/K} L^{\times}$ and $\operatorname{Nm}_{F/K} F^{\times}$ is equal to $K^{\times}$. Indeed, by local class field theory the group generated by $\operatorname{Nm}_{L/K} L^{\times}$ and $\operatorname{Nm}_{F/K} F^{\times}$ is itself equal to $\operatorname{Nm}_{K'/K} (K')^{\times}$ for a finite abelian extension $K'$ of $K$. Moreover, this extension $K'$ is equal to the intersection of $L$ and $F$ inside a maximal abelian extension $K^{\mathrm{ab}}$ of $K$. Since $L$ is unramified over $K$ and $F$ is totally ramified, this intersection is equal to $K$. Therefore we have an equality $\operatorname{Nm}_{K'/K} (K')^{\times}=\operatorname{Nm}_{K/K} (K)^{\times}=K^{\times}$ and we are done.
\end{proof}
\begin{proof}[Proof of Proposition \ref{Prop:GroupTheory}]
Let $Q \subset \mbg(\qp)$ be the group topologically generated by $\mbgder(\qp)$ and $\bigcup_{Y \in \mathcal{P}([m])} T_Y(\qp)$. Let $K \subset \mbg(\qp)$ be a special parahoric subgroup, then the Cartan decomposition, see \cite[Theorem 1.0.3]{HainesSpecial}, tells us that 
\begin{align} \label{Eq:Cartan}
    \mbg(\qp)=K \mbm(\qp) K,
\end{align}
where $\mbm$ is the centraliser of a maximal split torus $S$ of $\qp$. Thus to show that $Q=\mbg(\qp)$ it suffices to show that $Q$ contains $\mbm(\qp)$ and that $Q$ contains $K$. Note that $\mbmder$ is simply connected since $\mbm$ is a Levi subgroup of $\mbg$, see \cite{MalleTesterman}*{Proposition 12.14}.

\textbf{Step 1: The group $Q$ contains $\mbm(\qp)$.} By the vanishing of Galois cohomology for semi-simple simply connected groups over $p$-adic local fields, there is a short exact sequence
\begin{align}
    1 \to \mbmder(\qp) \to \mbm(\qp) \to \mbmab(\qp) \to 1.
\end{align}
Moreover since $\mbmder$ is semisimple, we see that $\mbmder \subset \mbgder$ and thus $\mbmder(\qp) \subset Q$. 

For each $Y \in \mathcal{P}([m])$, the torus $T_Y$ contains the maximal split torus of $\mbg$ by assumptions, which is $\mbgder(\qp)$-conjugate to $S$. Since $T_Y(\qp) \subset Q$ and since $Q$ contains $\mbgder(\qp)$, we may assume without loss of generality that $T_Y$ contains $S$ and is thus contained in $\mbm$ for all $Y$. To complete the proof of step 1, it thus suffices to show that the group (topologically) generated by $\bigcup_{Y \in \mathcal{P}([m])} T_Y(\qp)$ surjects onto $\mbmab(\qp)$. \smallskip 

The short exact sequences (for $Y \in \mathcal{P}([m])$)
\begin{equation}
    \begin{tikzcd}
    1 \arrow{r} & T_Y \cap \mbmder \arrow{r} & T_Y \arrow{r} & \mbmab \arrow{r} & 1,
    \end{tikzcd}
\end{equation}
induce long exact sequences (for $Y \in \mathcal{P}([m])$)
\begin{equation}
    \begin{tikzcd}
    1 \arrow{r} & (T_Y \cap \mbmder)(\qp) \arrow{r} & T_Y(\qp) \arrow{r} & \mbmab(\qp) \arrow{r} & H^1(\qp, T_Y \cap \mbmder) \arrow{r} & \cdots
    \end{tikzcd}
\end{equation}
We deduce that it is enough to prove that every element of $\mbmab(\qp)$ maps to zero in $H^1(\qp, T_Y \cap \mbmder)$ for some $Y$. Recall the notation from the beginning of Section \ref{Sec:Anisotropic}. By our assumptions on the tori $T_Y$, we find that
\begin{align}
    T_Y \cap \mbmder = \prod_{i \in Y} \operatorname{Res}_{K_i/\mathbb{Q}_p} R^1_{L_i/K_i} \mathbb{G}_m \times \prod_{i \not \in Y} \operatorname{Res}_{K_i/\mathbb{Q}_p} R^1_{F_i/K_i} \mathbb{G}_m, 
\end{align}
where $L_i \subset D_i$ is a maximal subfield that is unramified over $K_i$, and $F_i \subset D_i$ is a maximal subfield that is totally ramified over $K_i$. Now we note that the natural maps (for $Y \in \mathcal{P}([m])$)
\begin{align}
    \mbmab(\qp) \to H^1(\qp, T_Y \cap \mbmder)
\end{align}
factor through $\mbmab(\qp) \to H^1(\qp, Z_{\mbmder})$, where $Z_{\mbmder}$ denotes the center of $\mbmder$. But by Lemma \ref{Lem:Cohomology} in combination with Shapiro's lemma, every element of $H^1(\qp, Z_{\mbmder})$ maps to zero in $H^1(\qp, T_Y \cap \mbmder)$ for some $Y \in \mathcal{P}([m])$. We conclude that $Q$ surjects onto $\mbmab(\qp)$ and therefore contains $\mbm(\qp)$.

\textbf{Step 2: The group $Q$ contains a special parahoric subgroup.} For $Y=[m]$, by the description of $T_Y \cap \mbmder$ above, we see that it has maximal $\qpbr$-split rank. It follows that $T_Y \cap \mbm$ has maximal $\qpbr$-split rank, which means that $T_Y$ has maximal $\qpbr$-split rank among maximal tori that have maximal $\qp$-split rank; we deduce that $T_Y$ has maximal $\qpbr$-split rank among \emph{all} maximal tori of $\mbg$. This moreover means that $T_Y \otimes \qpbr$ is a maximal $\qpbr$-split torus. Because $\mbgder$ is simply connected, we see that $X_{\ast}(T_Y\cap \mbgder)$ is an induced Galois module for the action of the inertia group $I$, see \cite{BTII}*{Proposition 4.4.16}, and so $X_{\ast}(T_Y \cap \mbgder)_I$ is torsion free.

Let $\mathcal{T}_Y$ be the connected N\'eron-model of $T_Y$ and let $\mathcal{T}_Y^{\mathrm{der}}$ be the connected N\'eron-model of $T_Y \cap \mbgder$. Then since $X_{\ast}(T_Y \cap \mbgder)_I$ is torsion free, it follows from \cite[Lemma 6.7]{PappasRapoportAffineFlag} that there is a short exact sequence
\begin{align}
    1 \to \mathcal{T}_Y^{\mathrm{der}} \to \mathcal{T}_Y \to \mathcal{D} \to 1,
\end{align}
where $\mathcal{D}$ is the connected N\'eron model of $\mbgab$. Since $\mathcal{T}_Y^{\mathrm{der}}$ has connected special fibre, it follows from Lang's lemma that $\mathcal{T}_Y(\zp) \subset T_Y(\qp)$ surjects onto $\mathcal{D}(\zp)$. Thus the image of $Q$ in $\mbgab(\qp)$ contains $\mathcal{D}(\zp)$, and since $\mbgder(\qp) \subset Q$ it follows that $Q$ contains the inverse image of $\mathcal{D}(\zp)$. Parahoric subgroups of $\mbg(\qp)$ map to $\mathcal{D}(\zp)$ by Proposition \ref{Prop:ImageParahoricII} and hence $Q$ contains every parahoric subgroup of $\mbg(\qp)$. 
\end{proof}
\section{Main theorems} \label{Sec:MainThm}
In this section we will state the main theorems in full generality, giving Theorems \ref{Thm:Monodromy} and \ref{Thm:Newton} as special cases. We first recall our running assumptions and some notation. 

\subsubsection{} Let $(G,X)$ be a Shimura datum of Hodge type with reflex field $E$ and assume that $\gder$ is simply connected. Let $p>2$ be a prime number such that $G_{\qp}$ is quasi-split and splits over a tamely ramified extension. Let $K^p \subset G(\afp)$ be a sufficiently small compact open subgroup and let $K_p \subset G(\qp)$ be a connected very special parahoric subgroup, where ``connected'' is used in the sense of \cite[start of Section 2]{Zhou}. (Note that hyperspecial parahoric subgroups are automatically connected, see \cite[Remark 4.2.14.b)]{KisinPappas}. In particular, we don't have to worry about this assumption when deducing the main theorems of the introduction.) Choose a Hodge embedding $(G,X) \to (\mathcal{G}_V, \mathcal{H}_V)$ and $\mathbb{Z}_{(p)}$-lattice $V_{(p)} \subset V$ on which $\psi$ is $\mathbb{Z}_{(p)}$-valued, such that $K_p$ is the stabiliser of $V_p$ in $G(\qp)$; this is always possible by the discussion in \cite[Section 1.3.2]{KMPS}. Let $v |p $ be a prime of $E$ and let $\shg$ be the geometric special fiber of the integral model of the Shimura variety of level $K^pK_p$, see Section \ref{Sec:Models}.

\subsubsection{} Let $[b] \in B(G,\{\mu^{-1}\})$ be a $\mathbb{Q}$-non-basic element as defined in Section \ref{Sec:Monodromy}. Let $x \in \shgb(\ovfp)$ be an element contained in a distinguished central leaf (such $x$ exist by Lemma \ref{Lem:ExistenceDistinguished}) and let $b=b_x$ be as in the first paragraph of Section \ref{Sec:Monodromy}. Let $\ig \to \cb$ be the Igusa variety associated to $x$ as constructed in Section \ref{Sec:Igusa}. For $y \in \xmub(\ovfp)$, we will also consider the Igusa variety as a pro-\'etale $H_y$-torsor over $\cby^{\mathrm{perf}}$ using Corollary \ref{Cor:IgusaVarietiesAsTorsors}. In this notation, the Igusa variety $\ig \to \cb$ corresponds to $y=1 \in \xmub(\ovfp)$, and we will write $H=H_{1} \subset J_b(\qp)$ for its stabiliser. 

We write $J_b$ for the $\sigma$-centraliser of $b$. Note that $\jbder$ is simply connected since $J_b$ is an inner form of a Levi subgroup of $G_{\qp}$, see \cite{MalleTesterman}*{Proposition 12.14}. 
\begin{Assump} \label{Assump:2}
The group $J_b^{\mathrm{der}}$ has no compact factors.
\end{Assump}

This assumption means that if we write $J_b^{\mathrm{der}}$ as a product of $\qp$-almost-simple groups $\mbg_1 \times \cdots \times \mbg_n$, then $\mbg_i(\qp)$ is not compact in the $p$-adic topology for any $i$. Since $\jbder$ is simply connected, this is equivalent to asking the same for $J_b^{\mathrm{ad}}$.

Recall the subgroup $J_b''(\qp) \supset J_b'(\qp)$, which is the inverse image of $\gab(\zp) \subset \gab(\qp)$ under $J_b(\qp) \to \gab(\qp)$. Since $H_y$ is contained in a parahoric subgroup of $J_b(\qp)$, for example by the proof of Lemma \ref{Lem:IsParahoric}, it is contained in $J_b''(\qp)$ by Proposition \ref{Prop:ImageParahoricI}. We can now state our main theorem.
\begin{Thm} \label{Thm:Monodromy2}
Let $(G,X)$ and $[b]$ be as above. If Assumptions \ref{Assump:1} and \ref{Assump:2} hold, then for $y \in \xmub(\ovfp)$ the natural map
\begin{align}
    \pi_0(\ig \times \{y\}) \to \pi_0(\shg) 
\end{align}
is surjective with fibers in bijection with $G^{\mathrm{ab}}(\zp)$, equivariant for the action of $J_b''(\qp)$ (which stabilises the fibers by Lemma \ref{Lem:JbppActs}). In particular, the identification is $H_y$-equivariant for the natural action of $H_y$ on the fibers.
\end{Thm}
Assumption \ref{Assump:1} holds true if $G_{\qp}$ splits over an unramified extension by \cite{vH}*{Theorem 4.5.2}; the assumption in \cite{vH}*{Theorem 4.5.2} that \cite[Conjecture 4.3.1]{vH} holds is satisfied when $G_{\qp}$ is unramified, see \cite[Remark 4.3.2]{vH}. In particular, Theorem \ref{Thm:Monodromy2} implies Theorem \ref{Thm:Monodromy}. Moreover, Assumption \ref{Assump:1} also holds when $\mathbf{Sh}_{K^pK_p}(G,X)$ is proper, by \cite[Theorem 4.5.2]{vH}.
\begin{Rem} \label{Rem:MuOrdinary}
When $[b]$ is the $\mu$-ordinary element (see Section \ref{Sec:NewtonStratification} for the definition), then $J_b$ is quasi-split, which implies that Assumption \ref{Assump:2} holds. Moreover, Assumption \ref{Assump:1} holds because in this case $\cb=\shgb$ and $\shgb \subset \shg$ is dense by \cite[Theorem 3]{KMPS}. Indeed, the assumption that $\shg$ is locally integral in the statement of \cite[Theorem 3]{KMPS} holds because $K_p$ is very special, see \cite[Corollary 4.6.26]{KisinPappas}.
\end{Rem}
\begin{proof}[Proof of Theorem \ref{Thm:Monodromy2}]
\textbf{Step 1: The group $J_b'(\qp)$ acts trivially on $V$.} Fix a connected component $V$ of $\ig$. By Proposition \ref{Prop:Geometricmonodromy} and Assumption \ref{Assump:1}, the group $\jbderiso(\qp)$ acts trivially on $\pi_0(\ig)$. By Assumption \ref{Assump:2} we have $\jbderiso=\jbder$ and so $\jbder(\qp)$ acts trivially on $\pi_0(\ig)$. By Corollary \ref{Cor:MaxTorus} we can find, for every isomorphism class of maximal tori of $J_b'$, a representative $T' \subset J_b'$ such that $T'(\qp)$ stabilises $V$. 

Recall that $J_b'$ is connected reductive by Lemma \ref{Lem:JbpConnected}. It thus follows from Proposition \ref{Prop:GroupTheory} that we can find maximal tori $T_1, \cdots, T_n$ of $J_b'$, which can be specified up to isomorphism, such that the group topologically generated by $T_1(\qp), \cdots, T_n(\qp)$ and $\jbder(\qp)$ is equal to $J_b'(\qp)$. Since the stabiliser of $V$ in $J_b(\qp)$ is closed, see the proof of Lemma \ref{Cor:ClosureActsTrivially}, it follows that $J_b'(\qp)$ acts trivially on $V$. Since $V$ was chosen arbitrarily, this implies that $J_b'(\qp)$ acts trivially on $\pi_0(\ig)$. \smallskip

\textbf{Step 2: The theorem for $y=1$.} Assumption \ref{Assump:1} tells us that
\begin{align}
    \pi_0(\cb) \to \pi_0(\shg)
\end{align}
is a bijection, and therefore the fibers of $\pi_0(\ig) \to \pi_0(\shg)$ are in bijection with the fibers of $\pi_0(\ig) \to \pi_0(\cb)$. Now $\ig \to \cb^{\mathrm{perf}}$ is an $H$-torsor and the image of monodromy is contained in $H'=J_b'(\qp) \cap H$ by Proposition \ref{Prop:Geometricmonodromy}. Recall that the natural map $\cb^{\mathrm{perf}} \to \cb$ is a homeomorphism. The fact that $H'$ acts trivially on $\pi_0(\ig)$ then implies that the fibers of $\pi_0(\ig) \to \pi_0(\cb)$ are in bijection with $H/H'$. Since $\cb$ is a distinguished central leaf, it follows from Lemma \ref{Lem:IsParahoric} that there are parahoric subgroups $J,J'$ of $J_b(\qp)$ such that $J \subset H \subset J'$. In particular, it follows from Proposition \ref{Prop:ImageParahoricI} that $H/H' \simeq \gab(\zp)$. 

Now $J_b''(\qp)$ stabilises the fibers of $\pi_0(\ig) \to \pi_0(\shg)$ by Lemma \ref{Lem:JbppActs} and since $J_b'(\qp) \subset J_b''(\qp)$ acts trivially on $\pi_0(\ig)$ this action factors through an action of $J_b''(\qp)/J_b(\qp)=\gab(\zp)$. Since $H$ surjects onto $\gab(\zp)$, we see that our identification of the fibers with $\gab(\zp)$ is also $J_b''(\qp)$-equivariant. 

\textbf{Step 3: The theorem for arbitrary y.} For $y \in \xmub(\ovfp)$ we consider the morphism
\begin{align}
    \pi_{\infty}(-,y):\pi_0(\ig \times \{y\}) \to \pi_0(\cby) \to \pi_0(\shg).
\end{align}
We note that this morphism only depends on the connected component of $\xmub$ containing $\{y\}$. For $y \in \xmub(\ovfp)$ lying in the same component as $1 \in \xmub(\ovfp)$, the theorem therefore follows from the discussion above. Since $J_b(\qp)$ acts transitively on $\pi_0(\xmub)$ by \cite[Theorem A.1.3]{vH}, it suffices to prove the result for $y \in \operatorname{Orb}(1)$, where $\operatorname{Orb}(1) \subset \xmub(\ovfp)$ is the $J_b(\qp)$-orbit of $1$. 

But for $j \in J_b(\qp)$ we have $\pi_{\infty}(z, j \cdot 1) = \pi_{\infty}(j^{-1} z, 1)$ and so the fibers of $\pi_{\infty}(-,j \cdot 1)$ can be identified with the fibers of $\pi_{\infty}(j^{-1} z, 1)$ under the isomorphism $j:\ig \to \ig$. This identification of the fibers is $J_b''(\qp)$-equivariant for the precomposition of the natural $J_b''(\qp)$-action on the fibers of $\pi_{\infty}(-,j \cdot 1)$ with conjugation by $j$ (considered as an automorphism of $J_b''(\qp)$).

Since the action of $J_b''(\qp)$ on the fibers of $\pi_{\infty}(-, 1)$ identifies these fibers with principal homogeneous spaces for $\gab(\zp)$, the same is true for the $j$-twisted action of $J_b''(\qp)$ on the fibers of $\pi_{\infty}(-, j \cdot 1)$. Since $\gab(\zp)$ is abelian, it follows that the untwisted $J_b''(\qp)$-action on the fibers of $\pi_{\infty}(-, j \cdot 1)$ also identifies these fibers with principal homogeneous spaces for $\gab(\zp)$; the theorem is proved. 
\end{proof}
We now state the general version of Corollary \ref{Cor:LeavesIrred}.
\begin{Cor} \label{Cor:LeavesIrred2}
Let $(G,X)$ and $[b]$ be as above. If Assumptions \ref{Assump:1} and \ref{Assump:2} hold, then for $y \in \xmub(\ovfp)$, the natural map
\begin{align}
    \pi_0(\cy) \to \pi_0(\shg)
\end{align}
is surjective with finite fibers. Moreover, the fibers are in bijection with $H_y \backslash \gab(\zp)$.
\end{Cor}
\begin{proof}
Let $\pi_0(\ig) \to \pi_0(\cy)$ be the map induced from $\ig \times \{y\} \to \cy^{\mathrm{perf}}$. Then the fibers of this map are a subset of the fibers of the composition $\pi_0(\ig) \to \pi_0(\shg)$ of our map with $\pi_0(\cy) \to \pi_0(\shg)$. To determine this subset, we observe that $$\ig \times \{y\} \to \cy^{\mathrm{perf}}$$ is a $H_y$-torsor by Corollary \ref{Cor:IgusaVarietiesAsTorsors}. Moreover, $H_y \subset J_b''(\qp)$ acts on the fibers of $\pi_0(\ig) \to \pi_0(\shg)$ with stabiliser $H_y':=H_y \cap J_b'(\qp)$, by the $J_b''(\qp)$-equivariance of Theorem \ref{Thm:Monodromy2}. 

Therefore the fibers of $\pi_0(\ig) \to \pi_0(\cy)$ can be identified with $H_y/H_y' \subset \gab(\zp)$ and the fibers of $\pi_0(\cy) \to \pi_0(\shg)$ can be identified with $H_y \backslash \gab(\zp)$.
\end{proof}
Now we state the generalisation of Theorem \ref{Thm:Newton}. If $K_p$ is hyperspecial, then the representation-theoretic constant $\operatorname{Dim} V^{\hat{H}}_{\underline{\mu}}(\lambda_b)_{\text{rel}}$ is equal to $\operatorname{Dim} V_{\mu}(\lambda_b)_{\mathrm{rel}}$ from the statement of Theorem \ref{Thm:Newton}. If $G_{\qp}$ is moreover split, then $\operatorname{Dim} V_{\mu}(\lambda_b)_{\mathrm{rel}}=1$; this is a straightforward consequence of the definition of $V_{\mu}(\lambda_b)_{\mathrm{rel}}$ in \cite[Section 2.6]{ZhouZhu} and the fact that $\{\mu\}$ is minuscule. In particular, Theorem \ref{Thm:Newton2} implies Theorem \ref{Thm:Newton}.
\begin{Thm} \label{Thm:Newton2}
Let $(G,X)$ and $[b]$ be as above. If Assumptions \ref{Assump:1} and \ref{Assump:2} hold, then for $y \in \xmub(\ovfp)$ the natural map 
\begin{align}
    \pi_0(\shgb) \to \pi_0(\shg)
\end{align}
is a bijection. Moreover, the number of irreducible components in each connected component of $\shgb$ is given by the representation-theoretic constant
\begin{align}
    \operatorname{Dim} V^{\hat{H}}_{\underline{\mu}}(\lambda_b)_{\text{rel}},
\end{align}
introduced in \cite[Section A.3]{ZhouZhu}. 
\end{Thm}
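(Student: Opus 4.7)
The plan is to combine the Mantovan-style product formula of Hamacher--Kim \cite{HamacherKim} with Theorem \ref{Thm:Monodromy2}, the Chen--Kisin--Viehmann analysis \cite{ChenKisinViehmann} of $\pi_0$ of affine Deligne--Lusztig varieties, and the Zhou--Zhu classification \cite{ZhouZhu} of their top-dimensional irreducible components. The Hamacher--Kim product formula supplies a natural map
\begin{align}
    \operatorname{Ig}_b \times X(\mu,b) \to \shgb^{\text{perf}},
\end{align}
realising the perfection of the Newton stratum as the quotient by the diagonal action of $J_b(\mathbb{Q}_p)$. Since perfection does not change the underlying topological space, both $\pi_0(\shgb)$ and the set of top-dimensional irreducible components of $\shgb$ can be read off from this quotient.

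For the first assertion, Theorem \ref{Thm:Monodromy2} identifies $\pi_0(\operatorname{Ig}_b)$ with $\pi_0(\shg) \times G^{\text{ab}}(\mathbb{Z}_p)$ as a $J_b(\mathbb{Q}_p)$-set, the action being trivial on the first factor and factoring through $J_b(\mathbb{Q}_p) \to G^{\text{ab}}(\mathbb{Q}_p)$ on the second. On the other side, \cite{ChenKisinViehmann} describes $\pi_0(X(\mu,b))$ as a torsor under a suitable quotient of $\pi_1(G)_\Gamma$ on which $J_b(\mathbb{Q}_p)$ acts transitively through a Kottwitz-type map compatible with $J_b \to G^{\text{ab}}$. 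Taking the diagonal $J_b(\mathbb{Q}_p)$-quotient of $\pi_0(\operatorname{Ig}_b) \times \pi_0(X(\mu,b))$ then precisely cancels the $G^{\text{ab}}(\mathbb{Z}_p)$-factor coming from $\pi_0(\operatorname{Ig}_b)$, leaving $\pi_0(\shg)$.

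For the second assertion, the appendix of \cite{ZhouZhu} enumerates the $J_b(\mathbb{Q}_p)$-orbits on the set of top-dimensional irreducible components of $X(\mu,b)$ and computes the cardinality of this orbit set as $\operatorname{Dim} V^{\hat{H}}_{\underline{\mu}}(\lambda_b)_{\text{rel}}$. After passing to the diagonal quotient, each such $J_b(\mathbb{Q}_p)$-orbit of top components of $X(\mu,b)$ contributes exactly one irreducible component of $\shgb$ per connected component of $\shgb$, yielding the advertised count inside each connected component.

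The main obstacle is the bookkeeping of $J_b(\mathbb{Q}_p)$-actions and stabilisers when passing from the product to the quotient. Concretely, one needs to verify that the Kottwitz-type map describing $\pi_0(X(\mu,b))$ lines up with the map $J_b \to G^{\text{ab}}$ of Theorem \ref{Thm:Monodromy2} so that the two $G^{\text{ab}}$-contributions cancel, and that the stabiliser in $J_b(\mathbb{Q}_p)$ of a top-dimensional component of $X(\mu,b)$ acts on $\pi_0(\operatorname{Ig}_b)$ in a way that does not introduce extra identifications beyond what is already captured by the $J_b(\mathbb{Q}_p)$-orbit count of Zhou--Zhu; both are essentially formal, relying on the compatibility of the Kottwitz maps for $G$, $J_b$, and $G^{\text{ab}}$.
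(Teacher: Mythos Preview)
Your overall architecture matches the paper's: Mantovan/Hamacher--Kim product formula plus Theorem~\ref{Thm:Monodromy2} plus Zhou--Zhu. The gap is in the step you label ``essentially formal''. To pass from the $J_b(\mathbb{Q}_p)$-orbit count on irreducible components of $X(\mu,b)$ to a count inside each connected component of $\shgb$, you need to know that the stabiliser in $J_b(\mathbb{Q}_p)$ of an irreducible component of $X(\mu,b)$ acts \emph{transitively} on the fibres of $\pi_0(\operatorname{Ig}_b)\to\pi_0(\shg)$, i.e.\ on $G^{\text{ab}}(\mathbb{Z}_p)$. If this stabiliser were too small, each Zhou--Zhu orbit would contribute several irreducible components per connected component rather than one, and the count would be off. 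So the worry is not that the stabiliser ``introduces extra identifications'', as you write, but that it might fail to introduce enough of them; and this is not formal.

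The paper supplies the missing ingredient as follows. Theorem~3.1.1 of \cite{ZhouZhu} says that the stabiliser of an irreducible component of $X(\mu,b)$ is a \emph{parahoric} subgroup of $J_b(\mathbb{Q}_p)$; hence the stabiliser of any connected component of $X(\mu,b)$ contains a parahoric. Section~\ref{Sec:Identification} then shows that a parahoric subgroup of $J_b(\mathbb{Q}_p)$ surjects onto $G^{\text{ab}}(\mathbb{Z}_p)$ via $J_b\to G^{\text{ab}}$. These two facts together give the required transitivity on fibres, for both the $\pi_0$ statement and the irreducible-component count. Your appeal to \cite{ChenKisinViehmann} for the $\pi_0$ part is a reasonable alternative (the paper instead cites \cite{vanHoften}*{Theorem A.1.4} for transitivity on $\pi_0(X(\mu,b))$), but for the irreducible-component count there is no way around the parahoric-stabiliser input, and you should name it explicitly rather than fold it into ``compatibility of Kottwitz maps''.
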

\begin{proof}
By Proposition \ref{Prop:ProductFormulaLemma} there is a commutative diagram
\begin{equation}
    \begin{tikzcd}
    \ig \times \operatorname{Orb}(1) \arrow{d} \arrow{r} & \ig \times \xmub \arrow{d} \\
    \cb^{\mathrm{perf}} \arrow{r} & \shgb^{\mathrm{perf}} \arrow{r} & \shg^{\mathrm{perf}}.
    \end{tikzcd}
\end{equation}
Assumption \ref{Assump:1} tells us that the composite map $\cb \to \shg$ induces a bijection 
\begin{align}
    \pi_0(\cb) \to \pi_0(\shg).
\end{align}
To prove the first part of the theorem, it thus suffices to prove that $\pi_0(\cb) \to \pi_0(\shgb)$ is a bijection, and since the injectivity follows from the injectivity of $\pi_0(\cb) \to \pi_0(\shg)$, it is enough to show that $\pi_0(\cb) \to \pi_0(\shgb)$ is surjective. Using the $J_b(\qp)$-torsor structure of $\ig \times \operatorname{Orb}(1) \to \cb^{\mathrm{perf}}$ and $\ig \times \xmub \to \shgb^{\mathrm{perf}}$, see Section \ref{Sec:Product} and Proposition \ref{Prop:ProductFormulaLemma}, this comes down to showing that the map
\begin{align}
    \ig \times \operatorname{Orb}(1) \to \ig \times \xmub
\end{align}
induces a surjection on connected components. By \cite[Theorem A.1.3]{vH}, the group $J_b(\qp)$ acts transitively on $\pi_0(\xmub)$, thus $\operatorname{Orb}(1) \to \xmub$ induces a surjection on $\pi_0$ which implies that $\ig \times \operatorname{Orb} y \to \ig \times \xmub$ induces a surjection on $\pi_0$. We deduce that the natural maps $\pi_0(\cb) \to \pi_0(\shgb) \to \pi_0(\shg)$ are all bijections. If we moreover let $\operatorname{Stab}_X \subset J_b(\qp)$ be the stabiliser of a connected component $X$ of $\xmub$, then $\operatorname{Stab}_X$ acts on $\pi_0(\ig)$ and it follows from the above reasoning that the natural map 
\begin{align} \label{Eq:FactComponents}
    \operatorname{Stab}_X \backslash \pi_0(\ig) \to \pi_0(\shgb)
\end{align}
is a bijection. \smallskip

For the second part of the theorem, we need to compute the set of $J_b(\qp)$-orbits of irreducible components of $\ig \times \xmub$. By \cite[Theorem A.3.1]{ZhouZhu}, the number of $J_b(\qp)$-orbits of irreducible components in $\xmub$ is given by 
\begin{align}
    N:=\operatorname{Dim} V^{\hat{H}}_{\underline{\mu}}(\lambda_b)_{\text{rel}}.
\end{align}
Let us choose representatives $a_1, \cdots, a_N$ of these orbits, with stabilisers $\operatorname{Stab}_{a_1}, \cdots, \operatorname{Stab}_{a_N}$ in $J_b(\qp)$. Let $\Sigma(\shgb)$ denote the set of irreducible components of $\shgb$, and also for other schemes. By the product formula, see Section \ref{Sec:Product}, the map 
\begin{align}
    \pi_{\infty}:\ig \times \xmub \to \shgb^{\mathrm{perf}}
\end{align}
is a $J_b(\qp)$-torsor. Thus we can write
\begin{align}
    \Sigma(\shgb) &= J_b(\qp) \backslash \left( \Sigma(\ig) \times \Sigma(\xmub) \right) \\
    &=J_b(\qp) \backslash \left( \pi_0(\ig) \times \Sigma(\xmub) \right) \\
    &=J_b(\qp) \backslash \left( \pi_0(\ig) \times \coprod_{i=1}^N \operatorname{Stab}_{a_i} \backslash J_b(\qp) \right) \\
    &= \coprod_{i=1}^N \operatorname{Stab}_{a_i} \backslash \pi_0(\ig).
\end{align}
For each $i$ we let $X_{a_i}$ be the connected component of $\xmub$ containing $a_i$. Then $\operatorname{Stab}_{a_i} \subset \operatorname{Stab}_{X_{a_i}}$ and moreover the map
\begin{align}
    \Sigma(\shgb) \to \pi_0(\shgb)
\end{align}
can be identified with the map
\begin{align}
    \coprod_{i=1}^N \operatorname{Stab}_{a_i} \backslash \pi_0(\ig) \to \pi_0(\shgb)
\end{align}
induced by the maps (the second map comes from equation \eqref{Eq:FactComponents})
\begin{align} \label{Eq:IgModStabToIgModStab}
    \operatorname{Stab}_{a_i} \backslash \pi_0(\ig) \to \operatorname{Stab}_{X_{a_i}} \backslash \pi_0(\ig) = \pi_0(\shgb). 
\end{align}
In particular, we see that it suffices to prove that the map in \eqref{Eq:IgModStabToIgModStab} is a bijection for all $i$. By Assumptions \ref{Assump:1} and \ref{Assump:2} we may invoke Theorem \ref{Thm:Monodromy2}, and we see that it suffices to show that $\operatorname{Stab}_{a_i} \subset J_b(\qp)$ surjects onto $\gab(\zp)$ for all $i$. But $\operatorname{Stab}_{a_i}$ is a parahoric subgroup by \cite[Theorem 3.1.1]{ZhouZhu}, and therefore it surjects onto $\gab(\zp)$ by Proposition \ref{Prop:ImageParahoricI}.

\end{proof}
\subsection{A conjectural description of the connected components of Igusa varieties} By Corollary \ref{Cor:Transitive}, the group $G(\afp) \times J_b(\qp)$ acts transitively on $\pi_0(\iginf)$. Moreover, under Assumptions \ref{Assump:1} and \ref{Assump:2} the group $J_b'(\qp)$ acts trivially by the proof of Theorem \ref{Thm:Monodromy2}, and the group $\gder(\afs)$ acts trivially by Lemma \ref{Lem:PrimeToPDerivedTrivial}. We have the following conjectural description of $\pi_0(\iginf)$.
\begin{Conj} \label{Conj:Igusa}
There is a $G(\afp) \times J_b(\qp)$-equivariant isomorphism of topological spaces
\begin{align}
    \pi_0(\iginf) = \gab(\mathbb{Q})^{\dag} \setminus \gab(\af),
\end{align}
where $J_b(\qp)$ acts via $J_b(\qp) \to \gab(\qp)$ and where $\gab(\mathbb{Q})^{\dag}$ is as in Section \ref{Sec:ProductFormulaAndPiNaught}.
\end{Conj}
\begin{Rem}
Assuming the conjecture, we get an automorphic description of $H^0_{\text{\'et}}(\iginf, \qlbar)$ as in \cite[Theorem A]{KretShin}. Indeed, there is a $\gab(\af)$-equivariant bijection
\begin{align}
    \gab(\mathbb{Q})^{\dag} \setminus \gab(\af) = \varprojlim_{K} \pi_0(\mathbf{Sh}_{K,\mathbb{C}}(G,X)),
\end{align}
and the zeroth \'etale cohomology of the right-hand side \emph{has} an automorphic description, as discussed in \cite[Section 5.1]{KretShin}.
\end{Rem}
Suppose that the conclusion of Theorem \ref{Thm:Monodromy2} holds. Then Conjecture \ref{Conj:Igusa} would follow if the following question had a positive answer.
\begin{Question} \label{Q:ToriGeneratedGab}
Is it true that the images of $I_z(\mathbb{Q})$ in $\gab(\mathbb{Q})^{\dag}$, as $\tilde{z}$ ranges over all the points in $\iginf(\ovfp)$, generate $\gab(\mathbb{Q})^{\dag}$?
\end{Question}
For the Igusa variety over the ordinary locus in the modular curve, this question asks if every $q \in \mathbb{Q}_{>0}^{\times}=\operatorname{GL}_2^{\mathrm{ab}}(\mathbb{Q})^{\dag}$ is equal to the norm of an element in an imaginary quadratic field $E$ where our fixed prime $p$ splits; the answer to this question is yes. 
\subsection{The discrete Hecke orbit conjecture} 
For the benefit of the reader, we recall the statement of the discrete Hecke orbit conjecture and its stronger version from \cite{KretShin}. Let the notation be as in Section \ref{Sec:Notation}, then the following two conjectures are \cite[Question 8.2.1, last two bullet points]{KretShin}.
\begin{Conj}[The strong discrete Hecke orbit conjecture] \label{Conj:DiscreteHOStrong}
The natural map $\pi_0(\cby) \to \pi_0(\shg)$ is a bijection for all $y$ in $\xmub(\ovfp)$ and all $K^p \subset G(\afp)$.
\end{Conj}
\begin{Conj}[The discrete Hecke orbit conjecture] \label{Conj:DiscreteHOWeak}
If $G_{\qp}$ is unramified, then $G(\afp)$ acts transitively on $\pi_0(\cbyinf)$ for all $y$ in $\xmub(\ovfp)$.
\end{Conj}
If $G_{\qp}$ is not ramified, then $G(\afp)$ does not necessarily act transitively on $\pi_0(\shginf)$, see \cite{OkiComponents} for explicit counterexamples. Therefore the assumption that $G_{\qp}$ is unramified is necessary in Conjecture \ref{Conj:DiscreteHOWeak}. Conjecture \ref{Conj:DiscreteHOWeak} follows from Conjecture \ref{Conj:Igusa} using weak approximation for the torus $\gab$, which holds since $\gab_{\qp}$ splits over an unramified extension by the assumptions of \ref{Conj:DiscreteHOWeak}. Note that Conjecture \ref{Conj:DiscreteHOWeak} is proved by Kret--Shin as \cite[Theorem 8.2.6]{KretShin}. \smallskip

Conjecture \ref{Conj:DiscreteHOStrong} implies Conjecture \ref{Conj:DiscreteHOWeak} because $G(\afp)$ acts transitively on $\pi_0(\shginf)$ when $G_{\qp}$ is unramified, see \cite[Lemma 2.2.5]{KisinModels} and \cite[Corollary 4.1.11]{MP}. 

\subsection{A counterexample to the strong version of the discrete Hecke orbit conjecture}\label{counterexample}
The purpose of this section is to show that Conjecture \ref{Conj:DiscreteHOStrong} is false; we will present below a counterexample communicated to us by Rong Zhou. As noted in Remark \ref{Rem:ChaiOortLeaves}, it would be interesting to find a counterexample with $(G,X)=(\mathcal{G}_V, \mathcal{H}_V)$.

Our counterexample involves a unitary Shimura variety of PEL type. Let $F=F^+E$ be a CM field where $F^+$ is totally real of degree $4$ and $E$ is an imaginary quadratic field. Let $V$ be a Hermitian $F$-vector space of rank $2$ with signature $(1,1)$ at all infinite places of $F^+$ and let $(G,X)=(\operatorname{GU}_V, X_V)$ be the corresponding Shimura datum of PEL type. 

Let $p>2$ be a prime which splits in $E$ and which splits as $p=\mathfrak{p}_1 \mathfrak{p}_2$ in $F^+$, such that both $\mathfrak{p}_1$ and $\mathfrak{p}_2$ have residual degree $2$. Then
\begin{align}
    G_{\qp} \simeq \operatorname{Res}_{K/\qp} \operatorname{GL}_2 \times \operatorname{Res}_{K/\qp} \operatorname{GL}_2 \times \mathbb{G}_m,
\end{align}
where $K$ is the unique unramified quadratic extension of $\qp$. Let $[b] \in B(G,\{\mu^{-1}\})$ be the unique element which is $\mu_1$-ordinary in the first factor and which is basic in the second factor and third factor (see the third paragraph of Section \ref{Sec:NewtonStratification} for a definition of these terms). Here $\{\mu\}=(\{\mu_1\},\{\mu_2\},\{\mu_3\})$ according to the product decomposition of $G_{\qp}$. Let $b \in [b]$ and write $b=(b_1, b_2, b_3)$ using the product decomposition of $G_{\qp}$. There is an induced product decomposition
\begin{align}
    \xmub= \prod_{i=1}^3 X_{\{\nu_i\}}(b_i),
\end{align}
where $\{\nu_i\}=\{\sigma(\mu_i^{-1})\}$. Thus an element $y \in \xmub(\ovfp)$ has the form $y=(y_1,y_2,y_3)$ and so its stabiliser $H_y \subset J_b(\qp)=J_{b_1}(\qp) \times J_{b_2}(\qp) \times J_{b_3}(\qp)$ can be written as a product
\begin{align}
    H_y=H_{y_1} \times H_{y_2} \times H_{y_3}.
\end{align}
By Corollary \ref{Cor:LeavesIrred2}, if we can find some $y_2 \in X_{\{\nu_2\}}(b_2)$ such that $H_{y_2}$ does not surject onto the $\zp$-points of the maximal abelian quotient of $\operatorname{Res}_{K/\qp} \operatorname{GL}_2$, then $\pi_0(\cby) \to \pi_0(\shg)$ is not a bijection and thus Conjecture \ref{Conj:DiscreteHOStrong} will be false. \smallskip

In this case, there is an isomorphism $J_{b_2} \simeq \operatorname{Res}_{K/\qp} \operatorname{GL}_2$ and $X_{\{\nu_2\}}(b_2)$ is equidimensional of dimension $1$.
\begin{Lem}
The irreducible components of $X_{\{\nu_2\}}(b_2)$ are isomorphic to $\mathbb{P}^1$. The stabiliser of an irreducible component is a hyperspecial subgroup of $\operatorname{GL}_2(K)$, conjugate to $\operatorname{GL}_2(\mathcal{O}_K)$, which acts on $\mathbb{P}^1$ via the natural map $\operatorname{GL}_2(\mathcal{O}_K) \to \operatorname{GL}_2(\mathbb{F}_{p^2})$.
\end{Lem}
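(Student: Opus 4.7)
The plan is to identify $X(\mu_2,b_2)$ with the special fibre of a Rapoport-Zink space and then invoke Drinfeld's classical theorem on $p$-adic uniformisation. Since $b_2$ is basic in $J_{b_2}=\operatorname{Res}_{K/\mathbb{Q}_p}\operatorname{GL}_2$ and $\mu_2$ is the minuscule cocharacter $((1,0),(1,0))$ at the two embeddings $K\hookrightarrow\qpbr$, the framing object is a supersingular $p$-divisible $\mathcal{O}_K$-module $\mathbb{X}_0$ of height $2$ over $\mathcal{O}_K$ with signature $(1,1)$, i.e.\ a \emph{special formal $\mathcal{O}_K$-module} of height $2$ in Drinfeld's sense; by Dieudonn\'e-Manin such an $\mathbb{X}_0$ is unique up to $\mathcal{O}_K$-linear isogeny. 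Via the standard comparison between affine Deligne-Lusztig varieties and Rapoport-Zink spaces, $X(\mu_2,b_2)$ is the underlying reduced scheme of the Rapoport-Zink space $\mathcal{M}(\mathbb{X}_0)$.

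Next I would appeal to Drinfeld's theorem (see Boutot-Carayol for a detailed exposition): the moduli functor of special formal $\mathcal{O}_K$-modules of height $2$ equipped with a quasi-isogeny to $\mathbb{X}_0$ is represented by $\widehat{\Omega}_K\times\mathbb{Z}$, where $\widehat{\Omega}_K$ is the Deligne formal model of the $p$-adic upper half plane over $\zpbr$. It is then classical that the special fibre of $\widehat{\Omega}_K$ is a tree of projective lines meeting transversally at ordinary double points, with irreducible components indexed by the vertices of the Bruhat-Tits tree $\mathcal{B}$ of $\operatorname{PGL}_2(K)$ and nodes indexed by its edges, and the natural action of $\operatorname{GL}_2(K)$ on $\widehat{\Omega}_K$ induces its standard action on $\mathcal{B}$. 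The stabiliser in $\operatorname{GL}_2(K)$ of a vertex is a hyperspecial subgroup conjugate to $\operatorname{GL}_2(\mathcal{O}_K)$, and the $\mathbb{P}^1$ attached to this vertex is canonically the projective space of lines in the $2$-dimensional $k$-vector space $\Lambda/\mathfrak{m}_K\Lambda$, where $\Lambda\subset K^2$ is a representing lattice and $k=\mathcal{O}_K/\mathfrak{m}_K=\mathbb{F}_{q^2}$. The induced action of $\operatorname{GL}_2(\mathcal{O}_K)$ on this $\mathbb{P}^1$ therefore factors through the reduction $\operatorname{GL}_2(\mathcal{O}_K)\to\operatorname{GL}_2(\mathbb{F}_{q^2})$ and is the standard M\"obius action.

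The main technical obstacle is the first step: one must carefully match conventions between the ADLV data $(\mu_2,b_2)$ for $\operatorname{Res}_{K/\mathbb{Q}_p}\operatorname{GL}_2$ taken over $\qpbr$ and Drinfeld's moduli problem for $\mathcal{O}_K$-modules over $\zpbr$. In particular, one must verify that restriction of scalars introduces no spurious \'etale or multiplicative factor into $\mathbb{X}_0$, and that the signature condition $(1,1)$ at the two embeddings of $K$ matches the cocharacter component-by-component as $((1,0),(1,0))$. Once this dictionary is in place, all the remaining geometric input is classical.
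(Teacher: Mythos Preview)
Your approach differs substantially from the paper's, and while the conclusion you reach is the right one, there is a genuine gap at the crucial first step. Drinfeld's moduli interpretation of $\widehat{\Omega}_K$ (as in Boutot--Carayol) is for special formal $\mathcal{O}_D$-modules, where $D$ is the quaternion division algebra over $K$; in Rapoport--Zink language the underlying group is $\operatorname{Res}_{K/\mathbb{Q}_p} D^\times$, which is anisotropic modulo centre, and the $p$-divisible groups in question have $\mathbb{Q}_p$-height $8$ and dimension $4$. The datum here is instead $G_2=\operatorname{Res}_{K/\mathbb{Q}_p}\operatorname{GL}_2$ at hyperspecial level, with only an $\mathcal{O}_K$-action on $p$-divisible groups of height $4$ and dimension $2$. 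These are genuinely different Rapoport--Zink data, and your phrase ``special formal $\mathcal{O}_K$-module of height $2$ in Drinfeld's sense'' conflates them. Even if the two reduced special fibres happen to be isomorphic as $\operatorname{GL}_2(K)$-schemes, this is not a consequence of Drinfeld's theorem and would itself require proof; the ``technical obstacle'' you flag is therefore not merely a matter of matching conventions but the entire content of the lemma.

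The paper takes a route that sidesteps this issue completely: it applies the G\"ortz--He--Nie description of the Ekedahl--Oort stratification of $X(\mu_2,b_2)$, which yields two one-dimensional strata and one zero-dimensional stratum, and then identifies each one-dimensional stratum as a union of closures of classical Deligne--Lusztig varieties for $\operatorname{Res}_{\mathbb{F}_{p^2}/\mathbb{F}_p}\operatorname{GL}_2$. The $\mathbb{P}^1$-description of the irreducible components and the factorisation of the $\operatorname{GL}_2(\mathcal{O}_K)$-action through $\operatorname{GL}_2(\mathbb{F}_{q^2})$ then follow directly from the classical theory. This argument is combinatorial and stays entirely inside the affine Deligne--Lusztig framework; were your identification with $\widehat{\Omega}_K$ actually established, your approach would additionally yield the global incidence structure of the components for free, but as written it rests on an unproven assertion.
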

\begin{proof}
It follows from the main result of \cites{GortzHeNie} that $X_{\{\nu_2\}}(b_2)$ is a union of two one-dimensional Ekedahl--Oort strata and one zero-dimensional Ekedahl--Oort stratum. The irreducible components are the closures of the one-dimensional Ekedahl--Oort strata. It follows from \cite[Section 5.10]{GortzHeNie} that the irreducible components are unions of (closures of) classical Deligne--Lusztig varieties for the group $\operatorname{Res}_{\mathbb{F}_{p^2}/\mathbb{F}_p} \operatorname{GL}_2$. Therefore, their irreducible components are isomorphic to $\mathbb{P}^1$ and the action of $\operatorname{GL}_2(\mathcal{O}_K)$ factors through the natural action of $\operatorname{GL}_2(\mathbb{F}_{p^2})$.
\end{proof}
A direct computation shows that we can find a point $a\in \mathbb{P}^1(\ovfp)$ such that its stabiliser in $\operatorname{GL}_2(\mathbb{F}_{p^2})$ is given by the group of scalar matrices. Note that this stabiliser does not surject onto $\mathbb{F}_{p^2}^{\times}$ via the determinant map. This implies that the stabiliser of $a$ in $\operatorname{GL}_2(\mathcal{O}_K)$, which is the stabiliser of $a \in X_{\{\nu_2\}}(b_2)(\ovfp)$ in $J_b(\qp)$, does not surject onto $\mathcal{O}_K^{\times}$ via the determinant map. 
\subsection{Acknowledgements} 
We would like to thank Marco D'Addezio for answering our MathOverflow question, making his work in progress \cites{d2020monodromy} available to us and for discussions about Proposition \ref{Prop:FactIsocrystals}. We would also like to thank Brian Conrad, Sean Cotner, Andrew Graham, Xuhua He, João Lourenço, Ambrus Pal, Syed Waqar Ali Shah, Sug Woo Shin, Jize Yu and Rong Zhou for helpful discussions and correspondences. We are grateful to James Newton for comments on an earlier version of this work. We would like to thank the referee for a careful reading of a first version of our paper, and for many helpful suggestions on a first and second version.

\DeclareRobustCommand{\VAN}[3]{#3}

%\bibliographystyle{amsalpha}
%\bibliography{references.bib}
\printbibliography
\end{document}